\def\frk{\frak}               
\def\Phi{{\frk n}}
\def\Phi{{\frk N}}
\def\opn#1#2{\def#1{\operatorname{#2}}} 
\opn\chara{char} \opn\length{\ell} \opn\pd{pd} \opn\rk{rk}
\opn\projdim{proj\,dim} \opn\injdim{inj\,dim} \opn\rank{rank}
\opn\depth{depth} \opn\sdepth{sdepth} \opn\fdepth{fdepth}
\opn\grade{grade} \opn\height{height} \opn\embdim{emb\,dim}
\opn\codim{codim}  \opn\min{min} \opn\max{max}
\opn\Tr{Tr} \opn\bigrank{big\,rank}
\opn\superheight{superheight}\opn\lcm{lcm}
\opn\trdeg{tr\,deg}
\opn\reg{reg} \opn\lreg{lreg} \opn\ini{in} \opn\lpd{lpd}
\opn\size{size}
\opn\div{div} \opn\Div{Div} \opn\cl{cl} \opn\Cl{Cl}
\opn\Spec{Spec} \opn\Supp{Supp} \opn\supp{supp} \opn\Sing{Sing}
\opn\Ass{Ass} \opn\Min{Min}
\opn\Ann{Ann} \opn\Rad{Rad} \opn\Soc{Soc}
\opn\Im{Im} \opn\Ker{Ker} \opn\Coker{Coker} \opn\Am{Am}
\opn\Hom{Hom} \opn\Tor{Tor} \opn\Ext{Ext} \opn\End{End}
\opn\Aut{Aut} \opn\id{id}  \opn\deg{deg}
\opn\nat{nat}
\opn\pff{pf}
\opn\Pf{Pf} \opn\GL{GL} \opn\SL{SL} \opn\mod{mod} \opn\ord{ord}
\opn\Gin{Gin} \opn\Hilb{Hilb}
\opn\aff{aff} \opn\con{conv} \opn\relint{relint} \opn\st{st}
\opn\lk{lk} \opn\cn{cn} \opn\core{core} \opn\vol{vol}
\opn\link{link} \opn\star{star}
\opn\gr{gr}
\def\pot#1#2{#1[\kern-0.28ex[#2]\kern-0.28ex]}
\newcommand{\fracs}[2]{\displaystyle\frac{#1}{#2}}
\newcommand{\power}[1]{\llbracket #1 \rrbracket}
\newcommand{\Sum}[2]{\displaystyle\sum_{#1}^{#2}}
\def\sing{\textsc{Singular}}
\def\ti{\scriptstyle}
\opn\dirlim{\underrightarrow{\lim}}
\opn\inivlim{\underleftarrow{\lim}}
\let\to=\rightarrow
\def\Implies{\ifmmode\Longrightarrow \else
        \unskip${}\Longrightarrow{}$\ignorespaces\fi}
\def\implies{\ifmmode\Rightarrow \else
        \unskip${}\Rightarrow{}$\ignorespaces\fi}
\def\iff{\ifmmode\Longleftrightarrow \else
        \unskip${}\Longleftrightarrow{}$\ignorespaces\fi}
\newtheorem{Theorem}{Theorem}[]
\newtheorem{Lemma}[Theorem]{Lemma}
\newtheorem{Corollary}[Theorem]{Corollary}
\theoremstyle{definition}
\newtheorem{Remark}[Theorem]{Remark}
\newtheorem{Example}[Theorem]{Example}
\newtheorem{Definition}[Theorem]{Definition}
\let\epsilon\varepsilon
\let\phi=\varphi
\let\kappa=\varkappa
\def\qed{\ifhmode\textqed\fi
      \ifmmode\ifinner\quad\qedsymbol\else\dispqed\fi\fi}
\def\textqed{\unskip\nobreak\penalty50
       \hskip2em\hbox{}\nobreak\hfil\qedsymbol
       \parfillskip=0pt \finalhyphendemerits=0}
\def\dispqed{\rlap{\qquad\qedsymbol}}
\opn\dis{dis}
\def\pnt{{\raise0.5mm\hbox{\large\bf.}}}
\opn\Lex{Lex}
\begin{document}
\title{\bf A method to compute the General Neron Desingularization in the frame of one dimensional local domains}
\author{ Adrian Popescu, Dorin Popescu }
\thanks{The  support from the Department of Mathematics of the University of Kaiserslautern of the first author and the support from the project  ID-PCE-2011-1023, granted by the Romanian National Authority for Scientific Research, CNCS - UEFISCDI  of the second author are gratefully acknowledged. Both authors thank  CIRM, Luminy who provided excellent conditions and stimulative atmosphere in the main stage of our work.}

\address{Adrian Popescu,  Department of Mathematics, University of Kaiserslautern, Erwin-Schr\"odinger-Str., 67663 Kaiserslautern, Germany}
\email{popescu@mathematik.uni-kl.de}

\address{Dorin Popescu, Simion Stoilow Institute of Mathematics , Research unit 5,
University of Bucharest, P.O.Box 1-764, Bucharest 014700, Romania}
\email{dorin.popescu@imar.ro}

\maketitle

\begin{abstract} An algorithmic proof of General Neron Desingularization is given here for one dimensional local domains and it is implemented in \textsc{Singular}. Also a theorem recalling Greenberg' strong approximation theorem is presented for one dimensional Cohen-Macaulay local rings.

 \noindent
  {\it Key words } : Smooth morphisms,  regular morphisms, smoothing ring morphisms.\\
 {\it 2010 Mathematics Subject Classification: Primary 13B40, Secondary 14B25,13H05,13J15.}
\end{abstract}

\section*{Introduction}

A ring morphism $u:A\to A'$ has  {\em regular fibers} if for all prime ideals $P\in \Spec A$ the ring $A'/PA'$ is a regular  ring, i.e. its localizations are regular local rings. It has {\em geometrically regular fibers}  if for all prime ideals $P\in \Spec A$ and all finite field extensions $K$ of the fraction field of $A/P$ the ring  $K\otimes_{A/P} A'/PA'$ is regular.
If for all $P\in \Spec A$ the fraction field of $A/P$ has characteristic $0$ then the regular fibers of $u$ are geometrically regular fibers. A flat morphism $u$ is {\em regular} if its fibers are geometrically regular.

In Artin approximation theory \cite{A} an important result is the following theorem generalizing the Neron Desingularization \cite{N}, \cite{A}.

\begin{Theorem}[General Neron Desingularization, Popescu \cite{P0}, \cite{P}, \cite{P1},  Andre \cite{An}, Swan \cite{S}, Spivakovski \cite{Sp}]\label{gnd}  Let $u:A\to A'$ be a  regular morphism of Noetherian rings and $B$ a finite type $A$-algebra. Then  any $A$-morphism $v:B\to A'$   factors through a smooth $A$-algebra $C$, that is $v$ is a composite $A$-morphism $B\to C\to A'$.
\end{Theorem}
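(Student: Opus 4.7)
The plan is to reduce, via Noetherian approximation of $B$ and $A$, to the situation where $A$ is Noetherian and $B$ is a finitely presented $A$-algebra, say $B=A[X_1,\dots,X_n]/I$ with $I=(f_1,\dots,f_s)$. Following Elkik, one introduces the \emph{smoothness ideal} $H_{B/A}\subseteq B$: for every ordered subset $(f_{i_1},\dots,f_{i_r})$ of the generators and every $r\times r$ minor $\Delta$ of the Jacobian $(\partial f_{i_j}/\partial X_k)$, one adjoins to $H_{B/A}$ the product $\Delta\cdot\bigl((f_{i_1},\dots,f_{i_r}):I\bigr)$. Then $B$ is smooth over $A$ at a prime $\qq\in\Spec B$ if and only if $H_{B/A}\not\subseteq\qq$, so the task becomes to produce an $A$-algebra $C$ and a factorization $B\to C\xrightarrow{v'} A'$ of $v$ with $v'(H_{C/A})A'=A'$.

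The strategy is an induction on the closed subset $V\bigl(v(H_{B/A})A'\bigr)\subseteq\Spec A'$. If it is empty, then a Jacobson-type localization together with a partition-of-unity argument already produces a smooth $C$. Otherwise, one constructs an intermediate $A$-algebra $C$, sitting between $B$ and $A'$, for which $V\bigl(v'(H_{C/A})A'\bigr)$ is strictly smaller. This \emph{desingularization step} is the heart of the argument: one adjoins to $B$ finitely many new variables $Y_1,\dots,Y_m$ subject to polynomial relations whose Jacobian captures a chosen generator of $H_{B/A}$, and realizes these variables as elements of $A'$ using the regularity hypothesis on $u$. The latter is essential: since each fibre $A'\otimes_A\kappa(\pp)$ is geometrically regular, formal smoothness over $A/\pp$ together with an Artin--Rotthaus/N\'eron-type approximation of partial solutions lets one lift a formal solution to an actual element of $A'$.

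The principal obstacle is precisely this inductive step. Its construction requires delicate bookkeeping between $I$, the Jacobian ideal, and the associated primes of $A$; in mixed characteristic one cannot invoke formal smoothness directly, and one is forced to use either Andr\'e--Quillen homology (Andr\'e's approach) or Elkik's smoothing lemma combined with Cohen-algebra manipulations (Swan's simplification). Once this step is available, the induction terminates after finitely many iterations: the resulting strictly descending chain of closed sets in the Noetherian space $\Spec A'$ must stabilise, and at the final stage one has reached a smooth $A$-algebra $C$ through which $v$ factors.
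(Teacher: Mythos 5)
Your outline is the standard roadmap for the general theorem in the style of Popescu and Swan: Elkik's ideal $H_{B/A}$, Noetherian induction on the closed subset $V(v(H_{B/A})A')\subset\Spec A'$, and a ``desingularization step'' that strictly shrinks this locus at each stage. As a roadmap it is accurate, but it is not a proof: essentially all of the content of the theorem lives in the desingularization step and in the use of the regularity hypothesis to lift a formal smooth factorization to a genuine one, and you defer both of these --- citing Andr\'e--Quillen homology, Elkik's smoothing lemma, Cohen-algebra manipulations, ``Artin--Rotthaus/N\'eron-type approximation'' --- without supplying any argument. What you have written is a correct table of contents, not a derivation; the base case (``a Jacobson-type localization together with a partition-of-unity argument'') is also left unspecified.

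It is also not the route this paper takes. The paper does not prove Theorem \ref{gnd} in this generality; it cites the references for it and proves only Theorem \ref{m}, the special case where $A$ and $A'$ are one-dimensional local domains with $A\supset\mathbb{Q}$, $mA'$ maximal. In that setting the global Noetherian induction you describe collapses to a single constructive step. One first reduces to the situation where some nonzero $d$ lies in $H_{B/A}\cap A$ (possible precisely because $\dim A=1$); then, with $\bar A=A/(d^3)$ Artinian, $\bar A'$ is a filtered union of essentially smooth $\bar A$-algebras $A_L$ (with $L/k$ a finitely generated separable subextension of $k'/k$), so $\bar v$ factors through a smooth $\bar A$-algebra $C\cong\bar A\otimes_A D$ for a smooth $A$-algebra $D$. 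The paper then Taylor-expands the defining polynomials $f$ around the approximate solution $y'\in D^n$, using explicit matrices $H,G$ with $GH=P\,\mbox{Id}_n$, to write down polynomials $h$ and $g$ such that a localization of $D[Y,T]/(I,h,g)$ is standard smooth over $A$ and factors $v$. This is a one-shot, algorithmic argument (implemented in \textsc{Singular}), with no descending chain of obstruction loci. Your proposal therefore sketches the shape of a much more general proof than the one the paper actually gives, while leaving unproved exactly the steps that make that general proof hard; if the goal is to prove what this paper proves, you should instead follow the dimension-one reduction and the explicit $h,g$ construction.
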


The purpose of this paper is to give an algorithmic proof of the above theorem when $A,A'$ are one dimensional local domains and $A\supset \mathbb Q$. This proof is somehow presented by the second author in a lecture given in the frame of a semester of Artin Approximation and Singularity Theory organized in 2015 by CIRM in Luminy (see http://hlombardi.free.fr/Popescu-Luminy2015.pdf). The algorithm was implemented by the authors in the Computer Algebra system \textsc{Singular} \cite{Sing} and will be as soon as possible  found in a development version as the library \verb"GND.lib" at \begin{verbatim}https://github.com/Singular/Source.\end{verbatim}

We may take the same General Neron Desingularization for $v,v':B\to A'$ if they are closed enough as Examples \ref{ex 4}, \ref{exc} show. The last section computes the General Neron Desingularization in several examples. We should point that the General Neron Desingularization is not unique and it is better to speak above about {\bf a} General Neron Desingularization.

When $A'$ is the completion of a Cohen-Macaulay local ring $A$ of dimension one we show that we may have a linear Artin function as it happens in the Greenberg's case (see \cite{Gr}). More precisely, the Artin function is given by $c\to 2e+c$, where $e$ depends from the polynomial system of equations defining $B$ (see Theorem \ref{gr}).

\vskip 0.5 cm

\section{The theorem}

 Let $u:A\to A'$ be a flat morphism of Noetherian local domains of dimension $1$. Suppose that $A\supset \mathbb Q$  and  the maximal ideal $m$ of $A$ generates the maximal ideal of $A'$. Then $u$ is regular morphism. Moreover, we suppose that there exist canonical inclusions $k=A/m\to A$, $k'=A'/mA'\to A'$ such that $u(k)\subset k'$.

  Let $B=A[Y]/I$, $Y=(Y_1,\ldots,Y_n)$. If $f=(f_1,\ldots,f_r)$, $r\leq n$ is a system of polynomials from $I$ then we can define the ideal $\Delta_f$ generated by all $r\times r$-minors of the Jacobian matrix $\left(\fracs{\partial f_i}{\partial Y_j}\right)$. After Elkik \cite{El} let $H_{B/A}$ be the radical of the ideal $\sum_f \big((f):I\big)\Delta_fB$, where the sum is taken over all systems of polynomials $f$ from $I$ with $r\leq n$.
Then $B_P$, $P\in \Spec B$ is essentially smooth over $A$ if and only if $P\not \supset H_{B/A}$ by the Jacobian criterion for smoothness.
   Thus  $H_{B/A}$ measures the non smooth locus of $B$ over $A$.

\begin{Definition}
  $B$ is {\bf standard smooth} over $A$ if  there exists  $f$ in $I$ as above such that $1\in \big((f):I\big)\Delta_fB$.
\end{Definition}
  The aim of this paper is to give an easy algorithmic  proof of the following theorem.

 \begin{Theorem} \label{m} Any $A$-morphism $v:B\to A'$   factors through a standard smooth $A$-algebra $B'$.
 \end{Theorem}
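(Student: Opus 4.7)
\bigskip

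The plan is to prove the theorem in three phases: producing the pivot data, constructing the candidate $B'$ via a Néron-type change of variables, and verifying standard smoothness.

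\smallskip

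\noindent\emph{Phase 1 (the pivot).} Set $y := v(Y) \in (A')^n$. We seek $f = (f_1,\dots,f_r) \subset I$ with $r\le n$, a minor $M = \det(\partial f_i/\partial Y_j)_{j\in J}$ on some index set $J\subset\{1,\dots,n\}$ of size $r$, and $P \in ((f):I)$ such that $d := v(PM) \neq 0$ in $A'$. Equivalently we need $v(H_{B/A}) \neq 0$. The argument: the kernel $\qq$ of the composition $B \xrightarrow{v} A' \hookrightarrow Q(A')$ satisfies $\qq\cap A = (0)$ (by flatness of $u$ and $A$ a domain), so $B/\qq$ is a finitely generated $A$-subdomain of $Q(A')$; in characteristic zero the extension $Q(B/\qq)/Q(A)$ is automatically separable, hence $B/\qq$ is essentially smooth over $A$ at its generic point, and the Jacobian criterion yields the desired $(f,M,P)$.

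\smallskip

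\noindent\emph{Phase 2 (construction of $B'$).} Set $d(Y) := P(Y)M(Y) \in A[Y]$. Morally the localization $B_{d(Y)}$ is already standard smooth (inverting $P$ makes $I = (f)$, and inverting $M$ makes the Jacobian a unit), but $v$ does not extend there because $d$ is typically not a unit in $A'$. The Néron trick replaces localization by adjunction: introduce new variables $T = (T_j)_{j\in J}$ and an auxiliary variable $W$, and use the polynomial Taylor identity
\[
f_i\bigl(Y + d(Y)\,\iota_J(T)\bigr) \;=\; f_i(Y) + d(Y)\,g_i(Y,T), \qquad g_i \in A[Y,T],
\]
together with the cofactor identity $N\cdot(\partial f_i/\partial Y_j)_{j\in J} = M\cdot I_r$ and the containment $P\cdot I \subset (f)$. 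I would then define $B'$ as a quotient of $A[Y,T,W]$ by the $f_i$, by equations derived from $N\cdot g$ in which the common factor $d(Y)$ has been algebraically cancelled modulo $(f)$ (using $PI\subset (f)$ to absorb spurious terms), and by a relation $W\cdot\phi - 1 = 0$ with $\phi \in 1 + (T)\,A[Y,T]$ a carefully chosen unit-like polynomial, so that $\phi(y,0) = 1$ is automatically a unit in $A'$.

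\smallskip

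\noindent\emph{Phase 3 and the main obstacle.} The extension $v':B'\to A'$ is defined by $Y\mapsto y$, $T\mapsto 0$, $W\mapsto \phi(y,0)^{-1} = 1$. Standard smoothness of $B'$ is then verified by choosing a subsystem of $2r+1$ defining polynomials and computing that the associated Jacobian minor with respect to $(Y_J, T_J, W)$ reduces, via the cofactor identity and the forced invertibility of $\phi$, to a unit in $B'$. The main obstacle --- and the algebraic heart of the argument --- is Phase 2's cancellation of $d(Y)$: one must produce polynomials \emph{over $A$} (not over $A'$) whose images witness the division by $d$, which requires both the containment $P\cdot I\subset (f)$ and the one-dimensional hypothesis. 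The latter ensures a single pivot $d$ suffices and that the required term-by-term cancellations can be organized uniformly, rather than iteratively as in the higher-dimensional Popescu--André--Swan setting; this is precisely what makes the construction algorithmic and amenable to the \textsc{Singular} implementation announced above. Once Phase 2 is properly set up, Phases 1 and 3 are comparatively routine.
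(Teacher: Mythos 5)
The skeleton you propose (pivot from the Jacobian ideal, Néron-type change of variables with auxiliary $T$-variables, Jacobian verification) is the right one, but the proposal leaves open precisely the step that makes the construction go through, and it is not a bookkeeping detail.

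Your Phase~2 attempts to build $B'$ over $A[Y,T,W]$ and to send $Y\mapsto y$, $T\mapsto 0$. This cannot work directly, because $y\in (A')^n$ is in general transcendental over $A$, and the polynomial data $b,g,\ldots$ that you need for the cancellation of $d$ must live in a finite type $A$-algebra. The paper inserts a crucial intermediate object: a \emph{smooth} $A$-algebra $D$ together with $y'\in D^n$ satisfying $I(y')\equiv 0 \pmod{d^3D}$. This $D$ is produced (Lemma~\ref{D}) by reducing modulo $d^3$: since $d\in m$ is regular and $\dim A=1$, the ring $\bar A=A/(d^3)$ is Artinian, and $\bar A'=A'/d^3A'$ is a filtered union of smooth finite-type $\bar A$-algebras by Grothendieck's structure theory of complete local rings (\cite[19.7.1.5]{G}); any such algebra $C$ through which $\bar v$ factors lifts to a smooth $A$-algebra $D$. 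Only then does one have $f(y')=d^2 b$ with $b\in dD^r$, which is what makes $g_i=s^pb_i+s^pT_i+Q_i$ a \emph{polynomial} relation and lets the division by $d^2$ happen over $D$. Your phrase ``produce polynomials over $A$ whose images witness the division by $d$'' names exactly this obstacle, but the proposal does not say how to overcome it; the answer is the detour through $\bar A$, $C$ and $D$, and it is not optional.

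Two further, related discrepancies. First, in the paper's construction the variables $T$ are mapped not to $0$ but to $t=H(y')\epsilon\in d(A')^n$, which records the error $y-w(y')=d^2\epsilon$; setting $T\mapsto 0$ forgets the information that the construction is designed to carry. The relation $h=s(Y-y')-dG(y')T$ ties $Y$, $y'$ and $T$ together and is absent from your sketch (your Taylor identity is taken around the generic $Y$, not around the base point $y'$). Second, Phase~1 stops at finding $d=v(PM)\neq 0$ in $A'$; the proof actually needs $d\in H_{B/A}\cap A$, and obtaining this requires the extra reduction step of adjoining a variable $Z$ with the relation $-d'+P'Z$ (Section~2 of the paper), after which the minor $M$ and the element $N$ are modified. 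Phase~1 as written does not produce the input that Phase~2 consumes. Fixing both points essentially forces you into the paper's argument.
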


If $A$ is  essentially of finite type over $\mathbb Q$, then the ideal $H_{B/A}$ can be  computed in \textsc{Singular} by following its definition but it is easier to describe only the ideal $\sum_f \big((f):I\big)\Delta_fB$ defined above.  This is the case  considered in our algorithmic part, let us say $A\cong k[x]/F$ for some variables $x=(x_1,\ldots x_t)$, and the completion of $A'$ is $K\llbracket x\rrbracket/(F)$ for some  field extension $k\subset K$. When $v$ is defined by polynomials $y$ from $K[x]$ then our problem is easy. Let $L$ be the field obtained by adjoining to $k$ all coefficients of $y$. Then $R=L[x]/(F)$ is a subring of $A'$ containing $\Im v$ which is essentially smooth over $A$. Then we may take $B' $ as a standard smooth $A$-algebra such that $R$ is a localization of $B'$.
Consequently we  suppose usually  that $y$ is not polynomial defined and moreover $L$ is not a finite type field extension of $k$.

\section{Reduction to the case when $H_{B/A}\cap A\not =0$.}

We may suppose that $v(H_{B/A})\not =0$. Indeed, if $v(H_{B/A}) =0$ then $v$ induces an $A$-morphism $v':B'=B/H_{B/A}\to A'$ and we may change $(B,v)$ by $(B',v')$. Applying this trick several times we reduce to the case  $v(H_{B/A})\not =0$. However the fraction field of $\Im v$ is essentially smooth over $A$ by separability, that is $H_{\Im v/A}A'\not =0$ and in the worst case our trick will change $B$ by $\Im v$ after several steps.

  Choose $P'\in \big(\Delta_f((f):I)\big)\setminus I$ for some system of polynomials $f=(f_1,\ldots,f_r)$ from $I$ and  $d'\in \big(v(P')A'\big )\cap A$, $d'\not = 0$. Moreover we may choose $P'$ to be from $M\big((f):I\big)$ where $M$ is a $r\times r$-minor of $\left(\fracs{\partial f}{\partial Y}\right)$ . Then $d'=v(P')z\in \big(v(H_{B/A})\big)\cap A$ for some $z\in A'$. Set $B_1=B[Z]/(f_{r+1}) $, where $f_{r+1}=-d'+P'Z$
and let $v_1:B_1\to A'$ be the map of $B$-algebras given by $Z\to z$. It follows that $d'\in \big(( f,f_{r+1}):(I,f_{r+1})\big)$ and  $d'\in \Delta_f$, $d'\in \Delta_{f_{r+1}}$. Then $d=d'^2\equiv P\ \mbox{modulo}\ (I,f_{r+1})$ for $P=P'^2Z^2\in H_{B_1/A}$. For the reduction change $B$ by $B_1$ and the Jacobian matrix $J=(\partial f/\partial Y)$ will be  now the new $J$ given by $
\left(\begin{array}{cc}
J & 0 \\
* & P'
\end{array}\right).$
Note that $d\in H_{B/A}\cap A$.

\begin{Example}\label{ex 4} Let  $a_1,a_2\in \mathbb C$  be two elements algebraically independent over $\mathbb Q$ and  $\rho$  a root of the polynomial $T^2+T+1$ in $ \mathbb C$. Then $k'=\fracs{\mathbb Q(a_1, a_2)[a_3]}{(a_3^2+a_3+1)}\cong \mathbb Q(\rho,a_1,a_2) $. Let
   $A = \left(\fracs{\mathbb Q[x_1, x_2]}{(x_1^3-x_2^2)}\right)_{(x_1,x_2)}$ and $B = \fracs{A[Y_1,Y_2,Y_3]}{(Y_1^3-Y_2^3)}$,  $A' = \fracs{k'\power{x_1,x_2}}{(x_1^3-x_2^2)}$ and the map $v$ defined as
$$\begin{xy}
\xymatrix@R-2pc{
v: & B \ar[r]& A' \\
&Y_1 \ar@{|->}[r] & a_1x_2\\
&Y_2 \ar@{|->}[r] & a_1a_3x_2\\
&Y_3 \ar@{|->}[r] & a_1 \Sum{i=0}{30}\fracs{x_1^i}{i!} + a_2x_2\Sum{i=31}{50}\fracs{x_1^i}{i!}
}
\end{xy}$$

This is an easy example. Indeed, let $v'':B''=A[a_3,a_1x_2,v(Y_3)]\to A'$ be the inclusion. We have $\Im v\subset B''\cong \fracs{A[T,Y_1,Y_3]}{(T^2+T+1)}$ and $B''_{2a_3+1}\cong \left(\fracs{A[T,Y_1,Y_3]}{(T^2+T+1)}\right)_{2T+1}$ is a smooth $A$-algebra, which  could be taken as a General Neron Desingularization of $B$.
Applying our algorithm we will get more complicated General Neron Desingularizations but useful for an illustration of our construction.

Then $\Im v$, the new $B$ will be $\fracs{B}{\Ker v}$, where the kernel is generated by the following polynomial:
\begin{verbatim}
ker[1]=Y1^2+Y1*Y2+Y2^2
\end{verbatim}

Next we choose $f = Y_1^2+Y_1Y_2+Y_2^2$ and we have $M = 2Y_2+Y_1$ and $1 \in \big((f):I\big)$ and hence $P' = Y_1 + 2Y_2$. Therefore $v(P') = (2a_1a_3 + a_1) \cdot x_2$ and $d' = x_2$, $z = \fracs{1}{2a_1a_3+a_1}$. Therefore $d = d'^2 = x_2^2$.

To be able to construct $\mathbb Q\left[\fracs{1}{2a_1a_3+a_1}\right][x]$ in $\sing$ we will  add a new variable  $a$ and we will  factorize with the corresponding polynomial $2a_1a_3 \cdot a + a_1 \cdot a -1$. We will see this $a$ as a new parameter from $k'\subset A'$. Then we change $B$ by $B_1=\fracs{B[Y_4]}{(-d'+P'Y_4)}$ and extend $v$ to a map $v_1:B_1\to A'$ given by $Y_4\to a$.  Changing $B$ by $B_1$ we may assume that $d\in H_{B/A}\cap A$.

\end{Example}

\begin{Example}\label{ex 4'}
Note that we could use $B$ instead $\Im v$. In this case we choose $f = Y_1^3-Y_2^3$ and take $M = 3Y_2^2$ and $1 \in \big((f):I\big)$. Therefore we obtain $P'=3Y_2^2$, $d'=x_2^2$, $d=x_2^4$ and the next computations are harder as we will see in the Examples \ref{ex 4'.2} and \ref{ex 4'.3}.
\end{Example}

\begin{Remark} We would like to work above with $A''=\fracs{\mathbb C\power{x_1, x_2}}{(x_1^3-x_2^2)}$ instead $A'$, $v$ being given by $v(Y_2)=a_1\rho x_2$. But this is hard since we cannot work in \sing\ with an infinite set of parameters.
We have two choices. If the definition of $v$ involves only a finite set of parameters then we proceed as Example \ref{ex 4} using some $A'\supset \Im v.$
Otherwise, we will see later that in the computation of the General Neron Desingularization we may use only a finite number of the coefficients of the formal power series defining $v(Y)$ and so this computation  works in \sing.
\end{Remark}

\begin{Remark} As we may see our algorithm could go also when $A'$ is not a domain, but there exist $P\in M((f):I)$ as above and a regular element $d\in m$ with $d\equiv P\ \mbox{modulo}\ I$. If $A$ is Cohen-Macaulay we may reduce to the case when there exists a regular element $d\in H_{B/A}\cap A$. However, it is hard usually to reduce to the case when $d\equiv P\ \mbox{modulo}\ I$ for some $P\in M((f):I)$. Sometimes this is possible as shows the following example.
\end{Remark}

\begin{Example}\label{ex 7} Let  $a_1,a_2\in \mathbb C$  be two elements algebraically independent over $\mathbb Q$. Consider $A = \left(\fracs{\mathbb Q[x_1, x_2,x_3]}{(x_2^3-x_3^2,x_1^3-x_3^2)}\right)_{(x_1,x_2,x_3)}$ and $B = \fracs{A[Y_1,Y_2,Y_3]}{(Y_1^3-Y_2^3)}$, $K' =  \fracs{\mathbb Q(a_1,a_2)[a_3]}{(a_3^2-a_1a_2)}$, $A' = \fracs{K'\power{x_1,x_2,x_3}}{(x_2^3-x_3^2,x_1^3-x_3^2)}$ and the map $v$ defined as
$$\begin{xy}
\xymatrix@R-2pc{
v: & B \ar[r]& A' \\
&Y_1 \ar@{|->}[r] & a_3x_1\\
&Y_2 \ar@{|->}[r] & a_3x_2\\
&Y_3 \ar@{|->}[r] & a_1 \Sum{i=0}{30}\fracs{x_3^i}{i!} + a_2\Sum{i=31}{50}\fracs{x_3^i}{i!}
}
\end{xy}$$

Then $\Im\ v$, the new $B$, will be $\fracs{B}{\Ker\ v}$, where the kernel is generated by six polynomials:
\begin{verbatim}
ker[1]=x2*Y1-x1*Y2
ker[2]=Y1^3-Y2^3
ker[3]=x1*Y1^2-x2*Y2^2
ker[4]=x1^2*Y1-x2^2*Y2
ker[5]=x1*x2^2*Y2-x3^2*Y1
ker[6]=x1^2*x2*Y2^2-x3^2*Y1^2
\end{verbatim}
Next we choose $f = x_2Y_1-x_1Y_2$ and we have $M = -x_1$. We may take  $N=-x_3^2 \in \big((f):I\big)$ and  $P' = x_1x_3^2$. Note that $x_1-x_2$ is a zero divisor in $A$ but $d' = P'$ is regular in $A$. In this example we may take $d=d'=P'=P$.
\end{Example}

\begin{Remark}\label{rem 8} Changing $B$ by $\Im v$ can be a hard goal if let us say $A'$ is a factor of the power series ring over $\mathbb C$ in some variables $x$ and $v(Y)$ is defined by formal power series whose coefficients form an infinite field extension $F$ of $\mathbb Q$. If  $v(Y)$ are polynomials in $x$ as in Examples \ref{ex 4}, \ref{ex 7} then it is trivial to find the General Neron Desingularization of $B$ as we explained already
in the last sentences of Section 1. For instance in Example \ref{ex 7}, $B'$ could be a localization of $K' \otimes_{\mathbb Q} A$. Thus  Examples \ref{ex 4}, \ref{ex 7} have no real importance, they being useful only for an illustration of our algorithm. This is the reason that in the next examples   the field $L$ obtained by adjoining to $k$ of all coefficients of $y$ will be an infinite type field extension of $k$ and $v(Y)$ are not all polynomials in $x$.

 However, this will complicate the algorithm because we are not able to tell to  the computer who is $v(Y)$ and so  how to get $d'$. We may choose an element $a\in m$ and find a minimal $c\in \mathbb N$ such that $a^c\in (v(M))+(a^{2c}) $ (this is possible because $\dim A=1$). Set $d'=a^c$. It follows that $d'\in (v(M))+ (d'^2)\subset  (v(M))+ (d'^4)\subset \ldots $ and so $d'\in (v(M))$, that is $d'=v(M)z$ for some $z\in A'$. Certainly we cannot find precisely $z$ but later it is enough  to know just a kind of truncation of it modulo $d'^6$.
 \end{Remark}

\begin{Example} \label{exc} Let  $a_i\in \mathbb C$, $i\in \mathbb N$  be  elements algebraically independent over $\mathbb Q$ and  $\rho$  a root of the polynomial $T^2+T+1$ in $ \mathbb C$.  Let
   $A = \left(\fracs{\mathbb Q[x_1, x_2]}{(x_1^3-x_2^2)}\right)_{(x_1,x_2)}$ and $B = \fracs{A[Y_1,Y_2,Y_3]}{(Y_1^2+Y_1Y_2+Y_2^2)}$,  $A' = \fracs{\mathbb C\power{x_1,x_2}}{(x_1^3-x_2^2)}$ and the map $v$ defined as
$$\begin{xy}
\xymatrix@R-2pc{
v: & B \ar[r]& A' \\
&Y_1 \ar@{|->}[r] & a_1\left(x_2+\Sum{i\geq 7}{} a_ix_2^i\right)\\
&Y_2 \ar@{|->}[r] & a_1a_3\left(x_2+\Sum{i\geq 7}{} a_ix_2^i\right)\\
&Y_3 \ar@{|->}[r] & a_1 \Sum{i=0}{9}\fracs{x_1^i}{i!} + x_2\sum_{i=10}^{\infty}a_{i-8}\fracs{x_1^i}{i!}
}
\end{xy}$$

As in Example \ref{ex 4} we may take $d'=x_2$, $d=d'^2$ and $a$. Our algorithm  goes exactly as in Examples \ref{ex 4}, \ref{ex 14}, \ref{ex 21} providing the same General Neron Desingularization.
This time we cannot find an easy General Neron Desingularization as in the first part of Example \ref{ex 4}.
\end{Example}

 \begin{Example}  \label{ex 10}  Let $A=\fracs{\mathbb Q[x_1,x_2]_{(x_1,x_2)}}{(x_1^2-x_2^3)}$,   $A'=\fracs{\mathbb C\power{x_1,x_2}}{(x_1^2-x_2^3)}$. Then the inclusion $A\subset A'$ is regular. Let $\theta_i = \Sum{j=0}{\infty}\alpha_{ij}x_2^j+x_1 \Sum{j=0}{\infty}\beta_{ij}x_2^j\in \mathbb C\power{x_1,x_2}$ for $i=3,4$ with $\alpha_{i0} =1$  and $y_1=\fracs{\theta_3^3}{\theta_4^2}$, $y_2=\fracs{\theta_4^2}{\theta_3}$, $y_3=x_2\theta_3$, $y_4=x_2\theta_4$. Let $f_1=Y_3^2-x_2^2Y_1Y_2$,  $f_2=Y_4^2-x_2Y_2Y_3$ be polynomials in $A[Y]$, $Y=(Y_1,\ldots,Y_4)$ and set $B=A[Y]/(f)$, $f=(f_1,f_2)$.

 If $R$ is a domain and $u\in R$ is such that $Y^2-u\in R[Y]$ has no solutions in $Q(R)$ then it is easy to see that $R[Y]/(Y^2-u)$ is a domain too. In our case we get that $R=A[Y_1,Y_2,Y_3]/(f_1)$ and $B=R[Y_4]/(f_2)$ are domains too.
 Then the map $v:B\to A'$ given by $Y\to y=(y_1,\ldots,y_4)$ is injective if we suppose that $\theta_3, \theta_4$ are algebraically independent over $A$. This follows since  $B$ is a domain and $\dim B=\trdeg_{Q(A)} Q(B)=\trdeg_{Q(A)}Q(\Im v)=2=\dim \Im v$. Moreover we will assume that the fields $L_i=\mathbb Q\big((\alpha_{ij},\beta_{ij})_j\big)$, $i=3,4$ have infinite transcendental degree over $\mathbb Q$.
 The Jacobian matrix $\left(\fracs{\partial f}{\partial Y}\right)$ have a $2\times 2-$minor $M=\det\left(\fracs{\partial f_i}{\partial Y_j}\right)_{\substack{1\leq i\leq 2\\ 3\leq j\leq 4}}=4Y_3Y_4\not \in (f)$. Note that $v(M)=x_2^2y_5$, where $y_5=1/(4\theta_3 \theta_4)$. Then we may take $B_1=B[Y_5]/(f_3)$, $f_3=-x_2^2+MY_5$ and $v_1$ given by $Y_5\to y_5$. Clearly, $P = M^2Y_5^2 \in H_{B_1/A}$ and $0 \neq d =x_2^4= v_1(P) \in A$.
 \end{Example}

\section{Proof of the case  when $H_{B/A}\cap A\not =0$.}

Thus we may suppose that there exists  $f=(f_1,\ldots,f_r)$, $r\leq n$ a system of polynomials from $I$, a $r\times r$-minor $M$ of the Jacobian matrix $(\partial f_i/\partial Y_j)$ and $N\in ((f):I)$ such that $0\not = d\equiv MN\ \mbox{modulo}\ I$.  Set   $\bar A=A/(d^3)$, $\bar A'=A'/d^3A'$, $\bar u=\bar A\otimes_Au$, $\bar B=B/d^3B$, $\bar v=\bar A\otimes_Av$. Clearly, $\bar u$ is a regular morphism of Artinian  local rings.

\begin{Remark} The whole proof could work with $\bar A=A/d^2u$ for any $u\in m$. We prefer to take $u=d$ as is done in \cite{P} and \cite{P2} but  we could  choose $u\not =d$, $u\in m\setminus m^2$ for easy computations.
\end{Remark}

 By  \cite[19,7.1.5]{G}  for every field extension $L/k$ there exists    a flat complete Noetherian local $\bar A$-algebra $\tilde A$, unique  up to an isomorphism, such that $m\tilde A$ is the maximal ideal of $\tilde A$ and $\tilde A/m\tilde A\cong L$. It follows that $\tilde A$ is Artinian. On the other hand, we may consider the localization $A_L$ of $L\otimes_k\bar A$ in $m(L\otimes_k\bar A)$ which is Artinian and so complete. By uniqueness we see that $A_L\cong \tilde A$. Set $k'=A'/mA'$. It  follows that $\bar A'\cong A_{k'}$. Note that $A_L$ is essentially smooth over $A$ by base change and $\bar A'$ is a filtered union of sub-$\bar A$-algebras $A_L$ with $L/k$ finite type field sub extensions of $k'/k$.

 Let $v$ be given by $Y\to y\in A'^n$.  Choose $L/k$ a finite type field extension such that $A_L$ contains the residue class ${\bar y}\in \bar A'^n$ induced by $y$.
 In fact ${\bar y}$ is a vector of polynomials in the generators of $m$ with the coefficients $c_{\nu}$ in $k'$ and we may take $L=k((c_{\nu})_{\nu})$. Then $\bar v$ factors through $A_L$. Assume that $k[(c_{\nu})_{\nu}]\cong k[(U_{\nu})_{\nu}]/\bar J$ for some new variables $U$ and a prime ideal  $\bar J\subset k[U]$. We have $H_{L/k}\not= 0$ because $L/k$ is separable. Then we may assume that there exist $ \omega=( \omega_1,\ldots,\omega_p)$ in $\bar J$ such that
 $\rho=\det(\partial \omega_i/\partial U_{\nu})_{i,\nu\in [p]}\not =0$ and a nonzero polynomial $ \tau\in ((\omega):\bar J)\setminus \bar J$. Thus $L$ is a fraction ring of the smooth $k$-algebra $(k[U]/( \omega))_{ \rho \tau}$. Note that $\omega, \rho, \tau$ can be considered in $ A$ because $k\subset  A$ and $c_{\nu}\in A'$ because $k'\subset A'$.

   Then $\bar v$ factors through a smooth $\bar A$-algebra $C\cong (\bar A[U]/(\omega))_{\rho\tau\gamma}$ for some polynomial $\gamma$ which is not in $m (\bar A[U]/(\omega))_{\rho\tau}$.

\begin{Lemma}\label{D} There exists a smooth $A$-algebra $D$ such that $\bar v$ factors through $\bar D=\bar A\otimes_A D$.
\end{Lemma}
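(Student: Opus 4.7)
The plan is to lift the presentation of the smooth $\bar A$-algebra $C$ one step higher, from $\bar A$ to $A$. Since $\omega=(\omega_1,\ldots,\omega_p)$, $\rho$ and $\tau$ already lie in $k[U]\subset A[U]$, only $\gamma$ needs attention; choose any lift $\tilde\gamma\in A[U]$ of $\gamma\in\bar A[U]$ and set
$$D:=\bigl(A[U]/(\omega)\bigr)_{\rho\tau\tilde\gamma},$$
presented as the quotient $D\cong A[U_1,\ldots,U_n,V_1,V_2,V_3]/\Phi$ with
$$\Phi=(\omega_1,\ldots,\omega_p,\,\rho V_1-1,\,\tau V_2-1,\,\tilde\gamma V_3-1).$$

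To verify that $D$ is standard smooth over $A$, I would take $\Phi$ itself as the subsystem $f$ in the definition, so that $(\Phi:\Phi)$ contains $1$ trivially and it only remains to show $1\in\Delta_\Phi D$. Consider the $(p+3)\times(p+3)$ submatrix of the Jacobian of the generators of $\Phi$ with respect to $U_{i_1},\ldots,U_{i_p}$ (the variables used to form the minor $\rho$) together with $V_1,V_2,V_3$. This submatrix is block lower triangular: the upper-left block is $(\partial\omega_i/\partial U_{i_j})_{i,j\in[p]}$, of determinant $\rho$, and the lower-right block is $\mathrm{diag}(\rho,\tau,\tilde\gamma)$. Its determinant is $\rho^2\tau\tilde\gamma$, a unit in $D$, so $1\in\Delta_\Phi D$ and $D$ is standard smooth over $A$.

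Finally, reducing modulo $d^3$ and using $\tilde\gamma\equiv\gamma\pmod{d^3}$ gives
$$\bar D=D\otimes_A\bar A\cong\bigl(\bar A[U]/(\omega)\bigr)_{\rho\tau\gamma}=C,$$
and the factorization $\bar B\to C\to\bar A'$ of $\bar v$ produced just before the lemma becomes the desired factorization $\bar B\to\bar D\to\bar A'$. In my view there is no real obstacle in the argument: the whole thing is a direct lift of the presentation of $C$, and the only substantive choice is to introduce the three auxiliary variables $V_1,V_2,V_3$ so that the invertibility of $\rho,\tau,\tilde\gamma$ in $D$ is witnessed by a single block Jacobian minor, which makes the standard smoothness criterion applicable immediately.
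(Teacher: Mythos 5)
Your proof takes essentially the same route as the paper: both set $D=(A[U]/(\omega))_{\rho\tau\gamma}$, the obvious lift of the presentation of $C$ (you are merely more explicit about lifting $\gamma$ from $\bar A[U]$ to $A[U]$, and you verify standard smoothness via a direct block Jacobian computation where the paper leaves smoothness implicit, as $D$ is a localization of a base change of the smooth $k$-algebra $(k[U]/(\omega))_{\rho\tau}$). Both then note $\bar D\cong C$, through which $\bar v$ already factors, which is the whole content of the lemma.
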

\begin{proof} By our assumptions $u(k)\subset k'$. Set $D=( A[U]/(\omega))_{\rho\tau\gamma}$ and $w:D\to A'$ be the map given by $U_{\nu}\to c_{\nu}$. We have $C\cong {\bar A}\otimes_AD$.
Certainly, $\bar v$ factors through  $\bar w=\bar A\otimes_Aw $ but in general $v$ does not factor through $w$.
\hfill\ \end{proof}
\begin{Remark} \label{rem} If $A'=\hat A$ then $\bar A\cong \bar A'$ and we may take $D=A$.
\end{Remark}

\begin{Remark} Suppose that $k\subset A$ but $L\not \subset A'$ and so $k'\not \subset A'$.  Then $D=(A[U,Z]/(\omega-d^3Z))_{\rho\tau\gamma}$, $Z=(Z_{\nu})$ is a smooth $A$-algebra and $\bar D\cong C[Z]$. Since  $\bar v$ factors through a map $C\to \bar A'$ given let us say by $U\to \lambda+d^3A'$ for some $\lambda$ in $A'$ we see that  $\omega(\lambda)\equiv 0$ modulo $d^3$, that is $\omega(\lambda)=d^3z$ for some $z$ in $A'$.
Let $w:D\to A'$ be the $A$-morphism given by $(U,Z)\to (\lambda,z)$.  Certainly, $\bar v$ factors through  $\bar w=\bar A\otimes_Aw $ but in general $v$ does not factor through $w$.
If also $k\not \subset A$ then the construction of $D$ goes as above but using a lifting of $\omega,\tau, \gamma$ from $k[U]$ to $A[U]$. In both cases we may use $D$ as it follows.
\end{Remark}

\begin{Example}\label{ex 14} We reconsider Example \ref{ex 4}. We already know that $d = x_2^2$.
The algorithm gives us the following output:
\begin{verbatim}
This is C:
//   characteristic : 0
//   number of vars : 5
//        block   1 : ordering dp
//                  : names    a1 a3 a x1 x2
//        block   2 : ordering C
// quotient ring from ideal
_[1]=3*a1*a+2*a3+1
_[2]=a3^2+a3+1
_[3]=x1^3-x2^2
_[4]=x2^6
This is D:
//   characteristic : 0
//   number of vars : 5
//        block   1 : ordering dp
//                  : names    a1 a3 a x1 x2
//        block   2 : ordering C
// quotient ring from ideal
_[1]=3*a1*a+2*a3+1
_[2]=a3^2+a3+1
_[3]=x1^3-x2^2
\end{verbatim}
Indeed, $$C = \fracs{\bar A[a_1,a_3, a]}{\left(3a_1a+2a_3+1,a_3^2+a_3+1,x_2^6\right)}$$ and $$D = \fracs{ A[a_1,a_3, a]}{\left(3a_1a+2a_3+1,a_3^2+a_3+1\right)}.$$

Note that the first polynomial from $C$ comes from the standard basis computation of the ideal $\left(2a_1a_3a+a_1a-1,a_3^2+a_3+1\right)$.
\end{Example}

\begin{Example}\label{ex 4'.2}
Now we reconsider Example \ref{ex 4'}. We know that $d = x_2^4$.
The algorithm gives us the following output:
\begin{verbatim}
This is C:
//   characteristic : 0
//   number of vars : 5
//        block   1 : ordering dp
//                  : names    a1 a3 a x1 x2
//        block   2 : ordering C
// quotient ring from ideal
_[1]=a3^2+a3+1
_[2]=x1^3-x2^2
_[3]=a1^2*a-a3
_[4]=x2^12

This is D:
//   characteristic : 0
//   number of vars : 5
//        block   1 : ordering dp
//                  : names    a1 a3 a x1 x2
//        block   2 : ordering C
// quotient ring from ideal
_[1]=a3^2+a3+1
_[2]=x1^3-x2^2
_[3]=a1^2*a-a3
\end{verbatim}
Indeed, $$C = \fracs{\bar A[a_1,a_3, a]}{\left(a_3^2+a_3+1,a_1^2a-a_3,x_2^{12}\right)}$$ and $$D = \fracs{ A[a_1,a_3, a]}{\left(a_3^2+a_3+1,a_1^2a-a_3\right)}.$$
\end{Example}

\begin{Example}\label{ex 15} In the case of Example \ref{ex 7} we obtain the following output:
\begin{verbatim}
This is C:
//   characteristic : 0
//   number of vars : 5
//        block   1 : ordering dp
//                  : names    a1 a3 x1 x2 x3
//        block   2 : ordering C
// quotient ring from ideal
_[1]=x2^3-x3^2
_[2]=x1^3-x3^2
_[3]=x3^8
This is D:
//   characteristic : 0
//   number of vars : 5
//        block   1 : ordering dp
//                  : names    a1 a3 x1 x2 x3
//        block   2 : ordering C
// quotient ring from ideal
_[1]=x2^3-x3^2
_[2]=x1^3-x3^2
\end{verbatim}

Indeed this is the case since we have $d = x_1x_3^2$ and hence $$C = \fracs{\bar A[a_1,a_3]}{\left(x_3^8\right)}$$ and $$D = A[a_1, a_3].$$
\end{Example}

  \begin{Example}\label{ex 16} In Example \ref{ex 10} we consider  $a_1,a_2$ algebraically independent over $\mathbb Q$ and  set  $\theta_3' = 1+a_1x_2$ and
  $\theta_4' = 1+a_2x_2^2$. Suppose that $\theta_i'\equiv \theta_i\ \mbox{modulo}\ x_2^{12}$.
We have $y_3=x_2\theta_3$, $y_4=x_2\theta_4$, $y_1=\fracs{\theta_3^3}{\theta_4^2}$, $y_2=\fracs{\theta_4^2}{\theta_3}$, $y_5=\fracs{1}{(4\theta_3 \theta_4)}$. Choose $y'_i$, $i\in [5]$ polynomials with degrees $\leq 11$ in $x_2$ and linear in $x_1$ such that $y'_i\equiv y_i\mod (x_1^2,x_2^{12})$. We get $y'_1\equiv y_1=\theta_3^3/\theta_4^2\equiv {\theta'}_3^3/{\theta'}_4^2 \mod  (x_1^2,x_2^{12})$ and similarly for $y'_i$, $i>1$. Here we use the fact that $\theta_4^{-2}=\Sum{j=1}{e} (1-\theta_4^2)^j$ for some $e>>0$ because $1-\theta_4^2$ is nilpotent in the ring $\bar A[a_1, a_2, a_3, a_4]$. Thus the coefficients of $y'_i$, $i\in [5]$ belong to the field $L$ obtained by adjoining to   $\mathbb Q$ the coefficients of $\theta'_3,\theta'_4$. Note that in this case
$L= Q\left(\mathbb Q[a_1,\ldots,a_4]\right)$.
Then we obtain the following output:
\begin{verbatim}
This is C:
//   characteristic : 0
//   number of vars : 4
//        block   1 : ordering dp
//                  : names    a1 a2 x1 x2
//        block   2 : ordering C
// quotient ring from ideal
_[1]=x2^3-x1^2
_[2]=x1^8
This is D:
//   characteristic : 0
//   number of vars : 4
//        block   1 : ordering dp
//                  : names    a1 a2 x1 x2
//        block   2 : ordering C
// quotient ring from ideal
_[1]=x2^3-x1^2
\end{verbatim}

 Thus $ C=\fracs{\bar A[a_1,\ldots, a_4]}{(x_2^{12})}\cong {\bar A}[a_1,\ldots,a_4]$ which is smooth over $\bar A$. Then $D$ is equal with $A[a_1,\ldots,a_4]$.
\end{Example}

Back to our proof note that the composite map $\bar B\to C\to \bar D$ is given by $Y\to y'+d^3D$ for some $y'\in D^n$. Thus $I(y')\equiv 0$ modulo $d^3D$. Since $\bar v$ factors through $\bar w$ we see that $\bar w(y'+d^3D)=\bar y$. Set $\tilde y=w(y')$. We get
  $y-\tilde y\in d^3A'^n$, let us say $y-\tilde y=d^2\epsilon$ for $\epsilon\in dA'^n$.

We have $d\equiv P$ modulo $I$ and so $P(y')\equiv d$ modulo $d^3$ in $D$ because $I(y')\equiv 0$ modulo $d^3D$. Thus $P(y')=ds$ for a certain $s\in D$ with $s\equiv 1$ modulo $d$.
Assume that $P=NM$ for some $N\in ((f):I)$.  Recall from beginning of Section 2  that the new $M$ is now the old one multiplied with $P'$ and the new  $N$ is the old one multiplied with $Z^2$. Let  $H$ be the $n\times n$-matrix obtained adding down to $(\partial f/\partial Y)$ as a border the block $(0|\mbox{Id}_{n-r})$. Let $G'$ be the adjoint matrix of $H$ and $G=NG'$. We have
$$GH=HG=NM \mbox{Id}_n=P\mbox{Id}_n$$
and so
$$ds\mbox{Id}_n=P(y')\mbox{Id}_n=G(y')H(y').$$

 Then $t:=H(y')\epsilon\in d{A'}^n$
satisfies
$$G(y')t=P(y')\epsilon=ds\epsilon$$
 and so
 $$s(y-\tilde y)=dw(G(y'))t.$$
 Let
 \begin{equation}\label{def of h}h=s(Y-y')-dG(y')T,\end{equation}
 where  $T=(T_1,\ldots,T_n)$ are new variables. The kernel of the map
$\phi:D[Y,T]\to A'$ given by $Y\to y$, $T\to t$ contains $h$. Since
$$s(Y-y')\equiv dG(y')T\ \mbox{modulo}\ h$$
and
$$f(Y)-f(y')\equiv \sum_j\fracs{\partial f}{\partial Y_j}(y') (Y_j-y'_j)$$
modulo higher order terms in $Y_j-y'_j$ by Taylor's formula we see that for $p=\max_i \deg f_i$ we have
\begin{equation}\label{def of Q}s^pf(Y)-s^pf(y')\equiv  \sum_js^{p-1}d\fracs{\partial f}{\partial Y_j}(y') G_j(y')T_j+d^2Q=s^{p-1}dP(y')T+d^2Q\end{equation}
modulo $h$ where $Q\in T^2 D[T]^r$. This is because $(\partial f/\partial Y)G=(P\mbox{Id}_r|0)$.  We have $f(y')=d^2b$ for some $b\in dD^r$. Then
\begin{equation}\label{def of g}g_i=s^pb_i+s^pT_i+Q_i, \qquad i\in [r] \end{equation}  is in the kernel of $\phi$ because $d^2\phi(g)=d^2g(t)\in (h(y,t),f(y))=(0)$. Set $E=D[Y,T]/(I,g,h)$ and let  $\psi:E\to A'$ be the map induced by $\phi$. Clearly, $v$ factors through $\psi$ because $v$ is the composed map $B\to B\otimes_AD\cong D[Y]/I\to E\xrightarrow{\psi} A'$.

Note that the $r\times r$-minor  $s'$ of $(\partial g/\partial T)$ given by the first  $r$-variables $T$ is from $s^{rp}+(T)\subset 1+(d,T)D[Y,T]$ because $Q\in (T)^2$. Then $U=(D[Y,T]/(h,g))_{ss'}$ is smooth over $D$. We claim that $I\subset (h,g)D[Y,T]_{ss's''}$ for some other $s''\in 1+(d,T)D[Y,T]$. Indeed, we have $PI\subset (h,g)D[Y,T]_s$ and so $P(y'+s^{-1}dG(y')T)I\subset (h,g)D[Y,T]_s$. Since  $P(y'+s^{-1}dG(y')T)\in P(y')+d(T)$ we get $P(y'+s^{-1}dG(y')T)=ds''$ for some $s''\in 1+(d,T)D[Y,T]$. It follows that $s''I\subset (h,g)D[Y,T]_{ss'}$ because $d$ is regular in $U$, the map $D\to U$ being flat, and so $I\subset (h,g)D[Y,T]_{ss's''}$. Thus $E_{ss's''}\cong U_{s''} $ is a $B$-algebra which is also standard smooth over $D$ and $A$.

 As $w(s)\equiv 1$ modulo $d$ and $w(s'),w(s'')\equiv 1$ modulo $(d,t)$, $d,t\in mA'$ we see that $w(s),w(s'), w(s'')$ are invertible because  $A'$ is local  and $\psi$ (thus $v$) factors through the standard smooth $A$-algebra $E_{ss's''}$.

\section{A theorem of  Greenberg's type}

Let $(A,m)$ be a Cohen-Macaulay local ring (for example a reduced ring) of dimension one, $A'={\hat A}$ the completion of $A$, $B=A[Y]/I$, $Y=(Y_1,\ldots,Y_n)$ a finite type $A$-algebra and $c,e\in \mathbb N$.
Suppose that there exist  $f=(f_1,\ldots,f_r)$ in $I$, a $r\times r$-minor $M$ of the Jacobian matrix $(\partial f/\partial Y)$, $N\in ((f):I) $ and an $A$-morphism $v:B\to A/m^{2e+c}$
such that $(v(MN))\supset m^e/m^{2e+c}$.

\begin{Theorem} \label{gr} Then there exists an $A$-morphism $v':B\to {\hat A}$ such that $v'\equiv v\ \mbox{modulo}\ m^c$, that is $v'(Y+I)\equiv v(Y+I)\ \mbox{modulo}\ m^c$.
\end{Theorem}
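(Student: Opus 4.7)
My plan is to construct $v'$ by a Newton--Hensel iteration directly inside $\hat A$, adapting the Taylor/adjoint machinery behind the smooth $A$-algebra $E$ of Section~3 but executing the fixed-point step in the complete local ring itself. Pick any set-theoretic lift $y_0\in\hat A^n$ of $v(Y+I)$; since $f\subset I$ and $v$ annihilates $I$ modulo $m^{2e+c}$, we have $f(y_0)\in m^{2e+c}\hat A^r$. Set $d:=M(y_0)N(y_0)$; the hypothesis $(v(MN))\supseteq m^e/m^{2e+c}$ reads $(d)+m^{2e+c}\hat A\supseteq m^e\hat A$. Assuming $c\ge 1$, $m^{2e+c}\subseteq m\cdot m^e$, and Nakayama applied to the finitely generated module $m^e/\bigl(m^e\cap(d)\bigr)$ upgrades this to $m^e\hat A\subseteq (d)\hat A$. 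Since $\hat A$ is Cohen--Macaulay of dimension one and $(d)$ is $m\hat A$-primary, $d$ is a non-zero-divisor and so is $d^2$; and $m^{2e+c}\subseteq m^c\cdot m^{2e}\subseteq m^c(d^2)$ lets one write $f(y_0)=d^2 b$ with $b\in m^c\hat A^r$.

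Next, form the border matrix $H$ by placing $J=(\partial f/\partial Y)$ atop $(0\,|\,\mathrm{Id}_{n-r})$, and set $G=N\cdot\mathrm{adj}(H)$, so that $HG=GH=P\,\mathrm{Id}_n$ with $P=MN$ and $JG=(P\,\mathrm{Id}_r\mid 0)$. Try the ansatz $y':=y_0+d\,G(y_0)\,t$, $t\in\hat A^n$; Taylor expansion combined with $P(y_0)=d$ yields
\[
f(y')\ =\ d^2\bigl(b+t_{[1..r]}+Q(t)\bigr),
\]
with $Q\in(t)^2\hat A[t]^r$ collecting the quadratic-and-higher terms. Since $d^2$ is regular, $f(y')=0$ is equivalent to the fixed-point equation $t_{[1..r]}=-b-Q(t)$. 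Setting $t_{r+1}=\cdots=t_n=0$ and iterating from $t^{(0)}=0$, an induction using $b\in m^c\hat A^r$ and $Q\in(t)^2$ gives $t^{(k)}\in m^c\hat A^r$ and $t^{(k+1)}-t^{(k)}\in m^{(k+1)c}\hat A^r$; by completeness the sequence converges to some $t\in m^c\hat A^r$, and the resulting $y'$ satisfies $f(y')=0$ and $y'\equiv y_0\pmod{m^{e+c}\hat A}$, in particular $y'\equiv v(Y+I)\pmod{m^c}$.

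The hard part is the final step: upgrading ``$f(y')=0$'' to ``$\alpha(y')=0$ for every $\alpha\in I$'', which is needed for $Y\mapsto y'$ to define an $A$-morphism $B\to\hat A$. For $\alpha\in I$, $N\alpha\in(f)A[Y]$ implies $N(y')\alpha(y')=0$, reducing the task to showing $N(y')$ is a non-zero-divisor in $\hat A$. Since $y'-y_0\in m^{e+c}\hat A^n$, Taylor gives $\delta:=N(y')-N(y_0)\in m^{e+c}\hat A$, hence $M(y_0)\delta\in m^{e+c}\subseteq m^c(d)\hat A$; writing $M(y_0)\delta=d\eta$ and cancelling the regular $d$ forces $\eta\in m^c\hat A$, so $M(y_0)N(y')=d(1+\eta)$ is a unit multiple of the regular element $d$, making $N(y')$ regular. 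Cancelling $N(y')$ in $N(y')\alpha(y')=0$ then gives $\alpha(y')=0$ for every $\alpha\in I$, and $v':B\to\hat A$, $Y\mapsto y'$, is the desired $A$-morphism with $v'\equiv v\pmod{m^c}$.
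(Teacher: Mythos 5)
Your proof is correct in substance and genuinely different in execution from the paper's. The paper follows the machinery of Theorem~\ref{m} almost verbatim: it constructs the standard-smooth $A$-algebra $E_{s's''}=\bigl(A[Y,T]/(I,h,g)\bigr)_{s's''}$, observes that the evaluation $(Y,T)\mapsto(y',0)$ defines a map $\beta'\colon E_{s's''}\to A/m^{c}$ (using $I(y')\equiv 0$ and $g(0)=b\in m^{c}A^{r}$), and then invokes the Implicit Function Theorem for smooth algebras over a complete local ring to lift $\beta'$ to $\hat A$. Your proof ``unfolds'' that black box: you write the same Taylor/adjoint identity $f(y_0+dG(y_0)t)=d^2\bigl(b+t_{[1..r]}+Q(t)\bigr)$ and solve the resulting fixed-point equation for $t$ directly by Newton iteration in the complete ring $\hat A$, then close the gap from ``$f$ vanishes'' to ``all of $I$ vanishes'' by showing $N(y')$ is regular, since $N\cdot I\subseteq(f)$ and $M(y_0)N(y')$ is a unit multiple of $d$. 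The paper's route buys conceptual cleanliness and produces a smooth $A$-algebra through which $v'$ factors — a useful byproduct in the Artin-approximation context — while yours is more elementary and self-contained, avoiding both the construction of $E$ and the appeal to the IFT.

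One small inaccuracy: you claim $y'\equiv y_0\pmod{m^{e+c}\hat A}$, but this need not hold, since $m^{e}\subseteq(d)$ does \emph{not} give $d\in m^{e}$, so $dG(y_0)t\in d\,m^{c}\hat A^{n}$ is not automatically inside $m^{e+c}\hat A^{n}$. The statement you should use instead is $y'-y_0\in d\,m^{c}\hat A^{n}$ (which holds because $t\in m^{c}\hat A^{n}$). With that correction the rest of your final paragraph goes through unchanged: Taylor then gives $\delta=N(y')-N(y_0)\in d\,m^{c}\hat A$, whence $M(y_0)\delta=d\eta$ with $\eta\in m^{c}$ by regularity of $d$, and $M(y_0)N(y')=d(1+\eta)$ is a unit times the regular element $d$. (You also implicitly assume $c\ge 1$ throughout, which is harmless since the conclusion for $c=0$ is vacuous, but it should be said.)
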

\begin{proof} We note that the proof of Theorem \ref{m} can work somehow in this case. Let $y'\in A^n$ be an element inducing $v(Y+I)$. Then $m^e\subset ((MN)(y'))+m^{2e+c}\subset ((MN)(y'))+m^{3e+2c}\subset \ldots$ by hypothesis. It follows that  $m^e\subset ((MN)(y'))$.
Since $A$ is Cohen-Macaulay we see that $m^e $ contains a regular element of $A$ and so $(MN)(y')$ must be regular too.

Set $d=(MN)(y')$.
Next we follow the proof of Theorem \ref{m} with $D=A$, $s=1$, $P=MN$ and $H$, $G$ such that .
$$d\mbox{Id}_n=P(y')\mbox{Id}_n=G(y')H(y').$$
Let  $$h=Y-y'-dG(y')T,$$
 where  $T=(T_1,\ldots,T_n)$ are new variables. We have
  $$f(Y)-f(y')\equiv
dP(y')T+d^2Q$$
modulo $h$ where $Q\in T^2 A[T]^r$. But $f(y')\in m^{2e+c}A^r\subset d^2m^cA^r$ and we get $f(y')=d^2b$ for some $b\in m^cA^r$. Set $g_i=b_i+T_i+Q_i$, $i\in [r]$ and $E=A[Y,T]/(I,h,g)$. We have an $A$-morphism $\beta:E\to A/m^c$ given by $(Y,T)\to (y',0)$  because $I(y')\equiv 0\ \mbox{modulo}\ m^{2e+c}$, $h(y',0)=0$ and $g(0)=b\in m^cA^r$.

 As in the proof of Theorem \ref{m} we have $E_{s's''}\cong U_{s''}$, where $U=(A[Y,T]/(g,h))_{s'}$. This isomorphism follows because $d$ is regular in $A$ and so in $U$. Consequently, $E_{s's''}$ is smooth over $A$.
 Note that $\beta$ extends to a map $\beta':E_{s's''}\to A/m^c$. By the Implicit Function Theorem    $\beta'$ can be lifted to a map $w:E_{s's''}\to \hat A$ which coincides with $\beta'$ modulo $m^c$. It follows that the composite map $v'$, $B\to  E_{s's''}\xrightarrow{w} \hat A$ works.
\hfill\ \end{proof}

\begin{Corollary} In the assumptions of the above theorem, suppose that $(A,m)$ is excellent Henselian. Then there exists an $A$-morphism $v'':B\to A$ such that $v''\equiv v\ \mbox{modulo}\ m^c$, that is $v''(Y+I)\equiv v(Y+I)\ \mbox{modulo}\ m^c$.
\end{Corollary}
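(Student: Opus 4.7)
The plan is to combine Theorem \ref{gr} with General Neron Desingularization and then invoke the Henselian lifting property to pull the resulting morphism back from $\hat A$ to $A$.

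First I would apply Theorem \ref{gr} under the given hypotheses to obtain an $A$-morphism $v' : B \to \hat A$ with $v'(Y+I) \equiv v(Y+I) \bmod m^c$. Since $A$ is excellent, the canonical inclusion $A \to \hat A$ is a regular morphism, so by Theorem \ref{gnd} (General Neron Desingularization) the map $v'$ factors as $B \xrightarrow{\alpha} C \xrightarrow{\psi} \hat A$ for some smooth $A$-algebra $C$.

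Next, using the canonical isomorphism $A/m^c \cong \hat A / m^c \hat A$, I would reduce $\psi$ modulo $m^c \hat A$ and view the reduction as an $A$-morphism $\bar\psi : C \to A/m^c$. Because $A$ is Henselian and $C$ is smooth over $A$, the standard smooth lifting property for Henselian pairs (equivalently, the Henselian implicit function theorem applied to a standard smooth presentation of $C$ given by the Jacobian criterion) produces an $A$-morphism $\psi' : C \to A$ satisfying $\psi' \equiv \bar\psi$ modulo $m^c$. Setting $v'' := \psi' \circ \alpha : B \to A$, the reduction $v'' \bmod m^c$ coincides with the composition $B \xrightarrow{\alpha} C \xrightarrow{\bar\psi} A/m^c$, which is precisely the reduction of $v'$ modulo $m^c \hat A$, hence, by Theorem \ref{gr}, the reduction of $v$. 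Therefore $v'' \equiv v$ modulo $m^c$.

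The only nontrivial ingredient beyond Theorem \ref{gr} and Theorem \ref{gnd} is the lifting step, and that is the expected main obstacle to formalize: one must verify that a smooth $A$-algebra equipped with a morphism to $A/m^c$ admits a compatible $A$-morphism to $A$ when $A$ is Henselian. This is classical (the pair $(A,m^c)$ is Henselian since $A$ is, and smooth morphisms admit sections over Henselian pairs once a section is given modulo the ideal), but it is where the hypothesis ``Henselian'' is essentially used, excellence having been used only to obtain the regularity of $A\to\hat A$ needed for the desingularization step.
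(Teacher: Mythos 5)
Your argument is correct, but it takes a different route than the paper. The paper's proof is a one-liner: it applies Theorem \ref{gr} to obtain $v' : B \to \hat A$ with $v' \equiv v \bmod m^c$, and then simply cites the fact that an excellent Henselian local ring has the Artin approximation property (referencing \cite{P}), which immediately furnishes $v'' : B \to A$ with $v'' \equiv v' \bmod m^c$. You instead unfold that black box: you invoke General Neron Desingularization (Theorem \ref{gnd}) for the regular map $A \to \hat A$ to factor $v'$ through a smooth $A$-algebra $C$, then use the Henselian lifting property for smooth algebras to descend the induced map $C \to A/m^c$ to an $A$-point of $C$, and compose. This is essentially reproving, in the case at hand, the Artin approximation theorem that the paper cites -- indeed it is the same mechanism (GND plus smooth lifting over a Henselian pair) that underlies the reference. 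What your version buys is self-containedness and a cleaner view of where ``Henselian'' and ``excellent'' are each used; what the paper's version buys is brevity by deferring to an established theorem. Both are valid, and the only hypothesis you need beyond what you state is that $(A, m^c)$ is a Henselian pair, which follows since $m^c$ and $m$ have the same radical.
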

\begin{proof}
An excellent Henselian local ring $(A,m)$ has the property of Artin approximation by \cite{P}, that is the solutions in $A$ of a system of polynomial equations $f$ over $A$ are dense in the set of the solutions of $f$ in $\hat A$. By Theorem \ref{gr} we get an $A$-morphism $v':B\to \hat A$ such that $v'\equiv v \ \mbox{modulo} \ m^c$. Then there exists an $A$-morphism $v'':B\to A$ such that $v''\equiv v'\equiv v\ \mbox{modulo}\ m^c$ by the property of Artin approximation.
\hfill\ \end{proof}

\begin{Theorem} \label{gr1} Let $(A,m)$ be a Cohen-Macaulay local ring  of dimension one,  $B=A[Y]/I$, $Y=(Y_1,\ldots,Y_n)$ a finite type $A$-algebra, $e\in \mathbb N$ and  $f=(f_1,\ldots,f_r)$ a system of polynomials from $I$. Suppose  that $A$ is excellent Henselian and there exist
a $r\times r$-minor $M$ of the Jacobian matrix $(\partial f/\partial Y)$, $N\in ((f):I) $ and $y'\in A^n$ such that $I(y')\equiv 0\ \mbox{modulo}\ m^e$ and  $((NM)(y'))\supset m^e$. Then the following statements are equivalent:
\begin{enumerate}
\item{} there exists $y''\in A^n$ such that $I(y'')\equiv 0\ \mbox{modulo}\ m^{3e}$ and $y''\equiv y'\ \mbox{modulo}\ m^e$,
\item{}  there exists $y\in A^n$ such that $I(y)=0$ and $y\equiv y'\ \mbox{modulo}\ m^e$.
\end{enumerate}
\end{Theorem}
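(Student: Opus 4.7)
The direction (2) $\Rightarrow$ (1) is immediate: take $y''=y$.

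For (1) $\Rightarrow$ (2), my plan is to invoke Theorem \ref{gr} together with its Corollary, applied to the $A$-morphism $v:B\to A/m^{3e}$ induced by $y''$, with parameter $c=e$ so that $2e+c=3e$ matches the precision provided by (1). Such an application will produce a lift $v':B\to \hat A$ with $v'\equiv v$ modulo $m^e$, and the excellent Henselian hypothesis will upgrade this via the Corollary to $v'':B\to A$ with $v''\equiv v$ modulo $m^e$. Setting $y=v''(Y+I)$ will then give $I(y)=0$ and $y\equiv y''\equiv y'$ modulo $m^e$, as desired.

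The nontrivial step is verifying the hypothesis of Theorem \ref{gr}, namely $m^e\subset ((NM)(y''))+m^{3e}$. First, the Cohen--Macaulay hypothesis combined with $m^e\subset ((NM)(y'))$ forces $d:=(NM)(y')$ to be regular and yields $m^{2e}\subset (d^2)$, hence $m^{3e}\subset d^2\cdot m^e$; this containment is exactly what will allow the residual term $b=f(y'')/d^2$ appearing in the proof of Theorem \ref{gr} to lie in $m^eA^r$. Since $y''\equiv y'$ modulo $m^e$, Taylor expansion of the polynomial $NM$ yields $\zeta:=(NM)(y'')-(NM)(y')\in m^e\subset (d)$, so $\zeta=d\alpha$ for a unique $\alpha\in A$, and $(NM)(y'')=d(1+\alpha)$.

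If $1+\alpha$ is a unit in $A$---the generic situation, guaranteed whenever $d$ has $m$-adic order strictly less than $e$---then $((NM)(y''))=((NM)(y'))\supset m^e$, and the hypothesis of Theorem \ref{gr} is verified. The main technical obstacle, and the place where I expect the most work, is the degenerate configuration in which $d$ has $m$-adic order exactly $e$ and the image of $\alpha$ in $A/m$ equals $-1$; in that case I would replace $y''$ by a careful perturbation $y''+\eta$, chosen by a rank analysis of $\partial(NM)/\partial Y$ at $y'$ so as to preserve both $I(y''+\eta)\equiv 0$ modulo $m^{3e}$ and $y''+\eta\equiv y'$ modulo $m^e$ while shifting the residue of $\alpha$ away from $-1$. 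Once this reduction is completed, the hypothesis of Theorem \ref{gr} holds, and the proof concludes by direct invocation of that theorem and its Corollary.
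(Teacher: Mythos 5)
Your basic reduction---apply Theorem \ref{gr} and its Corollary to the morphism induced by $y''$ with $c=e$---is the same route the paper intends, and your instinct to check the hypothesis $(v(MN))\supset m^e/m^{3e}$ for the morphism induced by $y''$ (rather than $y'$) is exactly right: the paper's one-sentence proof silently assumes this verification. But the gap you isolate is not a mere technicality to be handled by a perturbation; the degenerate configuration you worry about really occurs, and when it does, no admissible perturbation can repair it.

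Concretely, take $A=\mathbb Q\llbracket t\rrbracket$, $n=r=1$, $I=(f)$ with $f=\tfrac12(Y+t)^2+t^3$, $M=\partial f/\partial Y=Y+t$, $N=1$, $e=1$, $y'=0$. Then $I(y')=\bigl(\tfrac12 t^2+t^3\bigr)\subset m$ and $\bigl((NM)(y')\bigr)=(t)=m\supset m^e$, so the hypotheses of the Theorem hold. Statement (1) holds with $y''=-t$, since $f(-t)=t^3\in m^{3}=m^{3e}$ and $-t\equiv y'$ modulo $m^e$. But $f(y)=0$ forces $(y+t)^2=-2t^3$, which has no solution in $\mathbb Q\llbracket t\rrbracket$ because $-2t^3$ has odd $t$-adic order; so (2) fails. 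The obstruction is precisely the degeneration you flagged: $(NM)(y'')=0$, hence $\bigl((NM)(y'')\bigr)+m^{3e}\not\supset m^e$, and Theorem \ref{gr} cannot be invoked for the morphism induced by $y''$. Moreover your proposed perturbation cannot help: any $\eta$ with $f(y''+\eta)\in m^{3}$ satisfies $\tfrac12\eta^2+t^3\in m^3$, so $\eta\in m^2$ and $(NM)(y''+\eta)=\eta\in m^2$, still too deep in $m$. Your argument (and the paper's one-liner) does go through under the slightly stronger hypothesis $\bigl((NM)(y')\bigr)\supset m^{e-1}$: then $\delta:=(NM)(y'')-(NM)(y')\in m^e\subset m\cdot\bigl((NM)(y')\bigr)$, so the $\alpha$ you define lies in $m$, $1+\alpha$ is a unit, and $\bigl((NM)(y'')\bigr)=\bigl((NM)(y')\bigr)\supset m^e$; this is exactly the non-degenerate branch you already handle correctly. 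So the part of your proposal that "expects the most work" is in fact where the statement itself breaks down.
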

For the proof apply the above corollary and Theorem \ref{gr}.

\section{Computation of the General Neron Desingularization in Examples \ref{ex 4}, \ref{ex 4'}, \ref{ex 7}, \ref{ex 10} }

\begin{Example}\label{tutorial}
We would like to compute Example \ref{ex 4} in \sing \ using \verb"GND.lib". We quickly recall the example.

Let  $a_1,a_2\in \mathbb C$  be two elements algebraically independent over $\mathbb Q$ and  $\rho$  a root of the polynomial $T^2+T+1$ in $ \mathbb C$. Then $k'=\fracs{\mathbb Q(a_1, a_2)[a_3]}{(a_3^2+a_3+1)}\cong \mathbb Q(\rho,a_1,a_2) $. Let
   $A = \left(\fracs{\mathbb Q[x_1, x_2]}{(x_1^3-x_2^2)}\right)_{(x_1,x_2)}$ and $B = \fracs{A[Y_1,Y_2,Y_3]}{(Y_1^3-Y_2^3)}$,  $A' = \fracs{k'\power{x_1,x_2}}{(x_1^3-x_2^2)}$ and the map $v$ defined as
$$\begin{xy}
\xymatrix@R-2pc{
v: & B \ar[r]& A' \\
&Y_1 \ar@{|->}[r] & a_1x_2\\
&Y_2 \ar@{|->}[r] & a_1a_3x_2\\
&Y_3 \ar@{|->}[r] & a_1 \Sum{i=0}{30}\fracs{x_1^i}{i!} + a_2x_2\Sum{i=31}{50}\fracs{x_1^i}{i!}
}
\end{xy}$$

 For this we do the following:
\begin{verbatim}
LIB "GND.lib";                              //load the library
ring All = 0,(a1,a2,a3,x1,x2,Y1,Y2,Y3),dp;  //define the ring
int nra = 3;                                //number of a's
int nrx = 2;                                //number of x's
int nry = 3;                                //number of Y's
ideal xid = x1^3-x2^2;                      //define the ideal from A
ideal yid = Y1^3-Y2^3;                      //define the ideal from B
ideal aid = a3^2+a3+1;                      //define the ideal from k'
poly y;
int i;
for(i=0;i<=30;i++)
{
  y = y + a1*x1^i/factorial(i);
}
for(i=31;i<=50;i++)
{
  y = y + a2*x2*x1^i/factorial(i);
}
ideal f = a1*x2,a1*a3*x2,y;                //define the map v
desingularization(All, nra,nrx,nry,xid,yid,aid,f,"debug");
\end{verbatim}
\end{Example}

 \begin{Example} \label{ex 21} We continue on the idea of Examples \ref{ex 4}, \ref{ex 14}. The bordered matrix $H$ defined above is equal to $$ H = \left(
 \begin{array}{cccc}
 2Y_1+Y_2 & Y_1+2Y_2 & 0 & 0\\
0 &      0 &      1&0 \\
1 &      0 &      0 & 0 \\
Y_4 &      2Y_4 &    0 & Y_1+2Y_2\\
 \end{array}\right)$$ and hence $G = N\cdot G'$ is equal to $$ G = Y_4^2 \cdot \left(
 \begin{array}{cccc}
 0 & 0 & Y_1^2+4Y_1Y_2+4Y_2^2 & 0\\
Y_1+2Y_2 &      0 &      -2Y_1^2-5Y_1Y_2-2Y_2^2&0 \\
0 &      Y_1^2+4Y_1Y_2+4Y_2^2 &      0 & 0 \\
-2Y_4 &     0 &   3Y_1Y_4 & Y_1+2Y_2\\
 \end{array}\right)$$ and $s = 1$. Using the definition of $h$ in Equation \ref{def of h}, we get that $$\begin{array}{ccl}
 h_1&=&Y_1-\left(x_2^4\right) \cdot T_3-(a_1x_2),\\

h_2&=&Y_2-\fracs{x_2^3}{2a_1a_3+a_1}\cdot T_1+  \fracs{a_3x_2^4+2x_2^4}{2a_3+1}\cdot T_3-(a_1a_3x_2),\\

h_3&=&Y_3-\left(x_2^4\right) \cdot T_2-\left(
\fracs{1}{6!}a_1x_1^6+\fracs{1}{5!}a_1x_1^5+\fracs{1}{4!}a_1x_1^4+\fracs{1}{3!}a_1x_1^3+\fracs{1}{2}a_1x_1^2+\right.\\&&\left.a_1x_1+a_1\right),\\

h_4&=&Y_4+\fracs{2x_2^2}{\left(2a_1a_3+a_1\right)^3}\cdot T_1-\fracs{3x_2^3}{a_1^2\left(2a_3+1\right)^3}\cdot T_3-\fracs{x_2^3}{2a_1a_3+a_1} \cdot T_4 +\fracs{1}{2a_1a_3+a_1}.\end{array}$$

From Equation \ref{def of Q} we get that
$$\begin{array}{ccl}
Q_1&=&\fracs{x_2^2}{\left(2a_1a_3+a_1\right)^2}\cdot T_1^2-
\fracs{3x_2^3}{a_1\left(2a_3+1\right)^2} \cdot T_1T_3+
\fracs{3a_3^2x_2^4+3a_3x_2^4+3x_2^4}{\left(2a_3+1\right)^2} \cdot T_3^2, \\
Q_2&=&-\fracs{4x_2}{\left(2a_1a_3+a_1\right)^4}\cdot T_1^2+
\fracs{12x_2^2}{a_1^3 \left(2a_3+1\right)^4} \cdot T_1T_3 -  \fracs{9x_2^3}{a_1^2\left(2a_3+1\right)^4}\cdot T_3^2+\\&& \fracs{2x_2^2}{\left(2a_1a_3+a_1\right)^2}\cdot T_1T_4- \fracs{3x_2^3}{a_1\left(2a_3+1\right)^2}\cdot T_3T_4
\end{array}$$

and therefore following the definition of $g$ in Equation \ref{def of g} we have
$$\begin{array}{ccl}
g_1&=&Q_1 +T_1+(a_1^2a_3^2+a_1^2a_3+a_1^2),\\
g_2&=& Q_2+T_2.
\end{array}$$

We print now the algorithm's debug output using the line codes from Example \ref{tutorial}.
\begin{verbatim}
This is the bordered matrix H:
2*Y1+Y2,Y1+2*Y2,0,0,
0,      0,      1,0,
1,      0,      0,0,
Z,      2*Z,    0,Y1+2*Y2
This is G:
0,                0,     G[1,3],   0,
Y1*Y4^2+2*Y2*Y4^2,0,     G[2,3],   0,
0,                G[3,2],0,        0,
-2*Y4^3,          0,     3*Y1*Y4^3,Y1*Y4^2+2*Y2*Y4^2

G[1,3]=Y1^2*Y4^2+4*Y1*Y2*Y4^2+4*Y2^2*Y4^2
G[2,3]=-2*Y1^2*Y4^2-5*Y1*Y2*Y4^2-2*Y2^2*Y4^2
G[3,2]=Y1^2*Y4^2+4*Y1*Y2*Y4^2+4*Y2^2*Y4^2

s = 1
h =
_[1]=Y1+(-x2^4)*T3+(-a1*x2)
_[2]=Y2+(-x2^3)/(2*a1*a3+a1)*T1+(a3*x2^4+2*x2^4)/(2*a3+1)*T3+
	(-a1*a3*x2)
_[3]=Y3+(-x2^4)*T2+(-a1*x1^6-6*a1*x1^5-30*a1*x1^4-120*a1*x1^3
	-360*a1*x1^2-720*a1*x1-720*a1)/720
_[4]=Y4+(2*x2^2)/(8*a1^3*a3^3+12*a1^3*a3^2+6*a1^3*a3+a1^3)*T1+
	(-3*x2^3)/(8*a1^2*a3^3+12*a1^2*a3^2+6*a1^2*a3+a1^2)*T3+
	(-x2^3)/(2*a1*a3+a1)*T4-1/(2*a1*a3+a1)

m = 2
QT =
_[1]=(x2^2)/(4*a1^2*a3^2+4*a1^2*a3+a1^2)*T1^2+
	(-3*x2^3)/(4*a1*a3^2+4*a1*a3+a1)*T1*T3+
	(3*a3^2*x2^4+3*a3*x2^4+3*x2^4)/(4*a3^2+4*a3+1)*T3^2

_[2]=(-4*x2)/(16*a1^4*a3^4+32*a1^4*a3^3+24*a1^4*a3^2+8*a1^4*a3+a1^4)
	*T1^2+(12*x2^2)/(16*a1^3*a3^4+32*a1^3*a3^3+24*a1^3*a3^2+8*a1^3*a3
	+a1^3)*T1*T3+(-9*x2^3)/(16*a1^2*a3^4+32*a1^2*a3^3+24*a1^2*a3^2
	+8*a1^2*a3+a1^2)*T3^2+(2*x2^2)/(4*a1^2*a3^2+4*a1^2*a3+a1^2)*T1*T4
	+(-3*x2^3)/(4*a1*a3^2+4*a1*a3+a1)*T3*T4

f =
f[1]=Y1^2+Y1*Y2+Y2^2
f[2]=Y1*Y4+2*Y2*Y4+(-x2^2)

g =
_[1]=(x2^2)/(4*a1^2*a3^2+4*a1^2*a3+a1^2)*T1^2+(-3*x2^3)/(4*a1*a3^2+
	4*a1*a3+a1)*T1*T3+(3*a3^2*x2^4+3*a3*x2^4+3*x2^4)/(4*a3^2+4*a3+1)*T3^2
	+T1+(a1^2*a3^2+a1^2*a3+a1^2)
_[2]=(-4*x2)/(16*a1^4*a3^4+32*a1^4*a3^3+24*a1^4*a3^2+8*a1^4*a3+a1^4)
	*T1^2+(12*x2^2)/(16*a1^3*a3^4+32*a1^3*a3^3+24*a1^3*a3^2+8*a1^3*
	a3+a1^3)*T1*T3+(-9*x2^3)/(16*a1^2*a3^4+32*a1^2*a3^3+24*a1^2*a3^2
	+8*a1^2*a3+a1^2)*T3^2+(2*x2^2)/(4*a1^2*a3^2+4*a1^2*a3+a1^2)*T1*T4
	+(-3*x2^3)/(4*a1*a3^2+4*a1*a3+a1)*T3*T4+T2


 \end{verbatim}
 Thus the General Neron Desingularization is a localization of $D[Y,T]/(h,g)\cong D[T]/(g)$.
 \end{Example}

 \begin{Example}\label{ex 4'.3}
 In the case of Example \ref{ex 4'} and \ref{ex 4'.2} we obtain that the bordered matrix $$ H = \left(
 \begin{array}{cccc}
 3Y_1^2 & -3Y_2^2 & 0 & 0\\
0 &      0 &      1&0 \\
1 &      0 &      0 & 0 \\
0 &      -6Y_2Y_4 &    0 & -3Y_2^2\\
 \end{array}\right)$$ and hence $G = N\cdot G'$ is equal to
 $$ G = Y_4^2 \cdot \left(
 \begin{array}{cccc}
 0 & 0 & 9Y_2^4 & 0\\
-3Y_2^2 &      0 &      -3Y_1^2Y_2^2&0 \\
0 &     9Y_2^4 &      0 & 0 \\
6Y_2Y_4 &     0 &   -18Y_1^2Y_2Y_4 & -3Y_2^2\\
 \end{array}\right)$$ and $s = 1$. Using the definition of $h$ in Equation \ref{def of h}, we get that $$\begin{array}{ccl}
 h_1&=&Y_1-\left(x_2^8\right)\cdot T_3+\left(-a_1x_2\right),\\

h_2&=&Y_2+\fracs{x_2^6}{3a_1^2a_3^2}\cdot T_1-\fracs{x_2^8}{a_3^2}\cdot T_3-\left(a_1a_3x_2\right),\\

h_3&=&Y_3-\left(x_2^8\right) \cdot T_2-\left(\fracs{1}{12!}a_1x_1^{12}+\fracs{1}{11!}a_1x_1^{11}+\fracs{1}{10!}a_1x_1^{10}+\fracs{1}{9!}a_1x_1^{9}+\fracs{1}{8!}a_1x_1^{8}\right.\\&&\left.+\fracs{1}{7!}a_1x_1^{7}
+\fracs{1}{6!}a_1x_1^6+\fracs{1}{5!}a_1x_1^5+\fracs{1}{4!}a_1x_1^4+\fracs{1}{3!}a_1x_1^3+\fracs{1}{2}a_1x_1^2+a_1x_1+a_1\right),\\

h_4&=&Y_4+\fracs{2x_2^5}{9a_1^5a_3^5} \cdot T_1 - \fracs{2x_2^7}{3a_1^3a_3^5}\cdot T_3+\fracs{x_2^6}{3a_1^2a_3^2} \cdot T_4 +\fracs{1}{3a_1^2a_3^2}.\end{array}$$

From Equation \ref{def of Q} we get that
$$\begin{array}{ccl}
Q_1&=&\fracs{x_2^{10}}{27a_1^6a_3^6}\cdot T_1^3-
\fracs{x_2^{12}}{3a_1^4a_3^6} \cdot T_1^2T_3+
\fracs{x_2^{14}}{a_1^2a_3^6} \cdot T_1T_3^2+
\fracs{a_3^6x_2^{16}-x_2^{16}}{a_3^6} \cdot T_3^3-\\&&
\fracs{x_2^5}{3a_1^3a_3^3}\cdot T_1^2+
\fracs{2x_2^7}{a_1a_3^3}\cdot T_1T_3+
\fracs{3a_1a_3^3x_2^9-3a_1x_2^9}{a_3^3} \cdot T_3^2, \\

Q_2&=&\fracs{2ax_2^9}{27a_1^9a_3^9} \cdot T_1^3-
\fracs{2x_2^{11}}{3a_175a_3^9} \cdot T_1^2T_3+
\fracs{2x_2^{13}}{a_1^5a_3^9} \cdot T_1T_3^2-
\fracs{2x_2^{15}}{a_1^3a_3^9} \cdot T_3^3+\\&&
\fracs{x_2^{10}}{9a_1^6a_3^6} \cdot T_1^2T_4-
\fracs{2x_2^{12}}{3a_1^4a_3^6} \cdot T_1T_3T_4+
\fracs{x_2^{14}}{a_1^2a_3^6} \cdot T_3^2T_4-
\fracs{x_2^4}{3a_1^6a_3^6} \cdot T_1^2+\\&&
\fracs{2x_2^6}{a_1^4a_3^6} \cdot T_1T_3-
\fracs{3x_2^8}{a_1^2a_3^6} \cdot T_3^2-
\fracs{2x_2^5}{3a_1^3a_3^3} \cdot T_1T_4+
\fracs{2x_2^7}{a_1a_3^3} \cdot T_3T_4
\end{array}$$

and therefore following the definition of $g$ in Equation \ref{def of g} we have
$$\begin{array}{ccr}
g_1&=&Q_1 +  T_1,\\
g_2&=& Q_2 + T_2
\end{array}$$

To obtain this with \sing,  we use the same code lines as in Example \ref{tutorial}, but we change the last one with
\begin{verbatim}
desingularization(All, nra,nrx,nry,xid,yid,aid,f,"injective","debug");
\end{verbatim}
Doing this, the algorithm will not compute the kernel because of the \verb"injective" argument.
 \end{Example}

\begin{Example} We do now the same computations for Examples \ref{ex 7}, \ref{ex 15}.
 The bordered matrix $H$ defined above is equal to $$ H = \left(
 \begin{array}{cccc}
 x_2 & -x_1 & 0 & 0\\
0 &      0 &      1&0 \\
1 &      0 &      0 & 0 \\
0 &      0 &    0 & -x_1x_3^2\\
 \end{array}\right)$$ and hence $G = N\cdot G'$ is equal to $$ G = Y_4^2 \cdot \left(
 \begin{array}{cccc}
 0 & 0 & x_1^2x_3^4 & 0\\
-x_1x_3^4 &      0 &      x_1x_2x_3^4&0 \\
0 &      x_1^2x_3^4 &      0 & 0 \\
0 &     0 &   0 & -x_1x_3^2\\
 \end{array}\right)$$
 and $s = 1$. Using the definition of $h$ in Equation \ref{def of h}, we get that $$\begin{array}{ccl}
 h_1&=&Y_1+\left(x_1^3x_3^6\right) \cdot T_3-(a_3x_1),\\

h_2&=&Y_2-\left(x_1^2x_3^6\right)\cdot T_1+  \left(x_1^2x_2x_3^6\right)\cdot T_3-(a_3x_2),\\

h_3&=&Y_3+\left(x_1^3x_3^6\right) \cdot T_2-\left(\fracs{1}{7!}a_1x_3^7+\fracs{1}{6!}a_1x_3^6+\fracs{1}{5!}a_1x_3^5\right. \\ && \left. +\fracs{1}{4!}a_1x_3^4+\fracs{1}{3!}a_1x_3^3+\fracs{1}{2!}a_1x_3^2+a_1x_3+a_1\right),\\

h_4&=&Y_4+\left(x_1^2x_3^4\right) \cdot T_4 + 1.\end{array}$$

From Equation \ref{def of Q} we get that
$$\begin{array}{ccl}
Q_1&=&0, \\
Q_2&=&0
\end{array}$$
and therefore following the definition of $g$ in Equation \ref{def of g} we have
$$\begin{array}{ccr}
g_1&=&T_1\\
g_2&=&T_2.
\end{array}$$
To compute this with the library we do the following:
\begin{verbatim}
ring All = 0,(a1,a2,a3,x1,x2,x3,Y1,Y2,Y3),dp;
int nra = 3;
int nrx = 3;
int nry = 3;
ideal xid = x2^3-x3^2,x1^3-x3^2;
ideal yid = Y1^3-Y2^3;
ideal aid = a3^2-a1*a2;
poly y;
int i;
for(i=0;i<=30;i++)
{
  y = y + a1*x3^i/factorial(i);
}
for(i=31;i<=50;i++)
{
  y = y + a2*x3^i/factorial(i);
}
ideal f = a3*x1,a3*x2,y;
desingularization(All, nra,nrx,nry,xid,yid,aid,f,"debug");
\end{verbatim}
 The algorithm's output is as expected:
 \begin{verbatim}
This is the nice bordered matrix H:
(x2),(-x1),0,0,
0,   0,    1,0,
1,   0,    0,0,
0,   0,    0,(-x1*x3^2)
This is G:
0,              0,               (x1^2*x3^4)*Y4^2, 0,
(-x1*x3^4)*Y4^2,0,               (x1*x2*x3^4)*Y4^2,0,
0,              (x1^2*x3^4)*Y4^2,0,                0,
0,              0,               0,                (-x1*x3^2)*Y4^2


s = 1

h =
h[1]=Y1+(x1^3*x3^6)*T3+(-a3*x1)
h[2]=Y2+(-x1^2*x3^6)*T1+(x1^2*x2*x3^6)*T3+(-a3*x2)
h[3]=Y3+(x1^3*x3^6)*T2+(-a1*x3^7-7*a1*x3^6-42*a1*x3^5-210*a1*x3^4
     -840*a1*x3^3-2520*a1*x3^2-5040*a1*x3-5040*a1)/5040
h[4]=Y4+(-x1^2*x3^4)*T4+1

m = 1

QT =
QT[1]=0
QT[2]=0
f =
f[1]=(x2)*Y1+(-x1)*Y2
f[2]=(x1*x3^2)*Y4+(-x1*x3^2)

g
_[1]=T1
_[2]=T2
\end{verbatim}

Thus the General Neron Desingularization is a localization of $D[Y,T_3,T_4]/(h)\cong D[T_3,T_4]$.
 \end{Example}

 \begin{Example} We do now the same computations for Example \ref{ex 16}. In this example, the computations are much more complicated. The output is unfortunately too big but we will try to describe the result.

 The bordered matrix $H$ defined above is equal to $$ H = \left(
 \begin{array}{ccccc}
0 & x_2\cdot Y_3 & x_2\cdot Y_2 & -2\cdot Y_4 & 0\\
x_1^2\cdot Y_2 &  x_1^2 \cdot Y_1     &     -2\cdot Y_3 &0&0 \\
0 &      1 &      0 & 0&0 \\
1 &      0 &    0 & 0&0\\
0 &      0 &    4Y_4Y_5 & 4Y_3Y_5& 4Y_3Y_4\\
 \end{array}\right)$$ and hence $G = N\cdot G'$ is equal to $$ G = Y_5^2 \cdot \left(
 \begin{array}{ccccc}
 0 & 0 & 0 & 16Y_3^2Y_4^2&0\\
0 &      0 &    16Y_3^2Y_4^2  &0&0 \\
0 & -8Y_3Y_4^2 &  8x_1^2\cdot Y_1Y_3Y_4^2 &8x_1^2\cdot Y_2Y_3Y_4^2 & 0\\
-8Y_3^2Y_4 & -4x_2 \cdot Y_2Y_3Y_4 &   G[4,3] & 4x_1^2x_2 \cdot Y_2^2Y_3Y_4 &0\\
8Y_3^2Y_5 & x_2 \cdot Y_2Y_3Y_5+2Y_4^2Y_5 & G[5,3] & G[5,4]&4Y_3Y_4\\
 \end{array}\right),$$ where $$G[4,3] = 4x_1^2x_2 \cdot Y_1Y_2Y_3Y_4+2x_2 \cdot Y_3^3Y_4,$$$$G[5,3] = -4x_1^2x_2 \cdot Y_1Y_2Y_3Y_5-2x_2 \cdot Y_3^3Y_5-2x_1^2 \cdot Y_1Y_4^2Y_5 \textnormal{ and }$$ $$G[5,4] = -4x_1^2x_2 \cdot Y_2^2Y_3Y_5-2x_1^2 \cdot Y_2Y_4^2Y_5$$
 and $$\begin{array}{cl}s = & a_1^8a_2^2x_2^{12}-2a_1^5a_2^4x_2^{13}+a_1^2a_2^6x_2^{14}-2a_1^6a_2^3x_2^{12}+2a_1^3a_2^5x_2^{13}-a_1^4a_2^4x_2^{12}+2a_1a_2^6x_2^{13}+\\&2a_1^8a_2x_2^{10}-4a_1^5a_2^3x_2^{11}+4a_1^2a_2^5x_2^{12}
 -4a_1^6a_2^2x_2^{10}+4a_1^3a_2^4x_2^{11}+a_2^6x_2^{12}+2a_1a_2^5x_2^{11}+\\&a_1^8x_2^8-2a_1^5a_2^2x_2^9+3a_1^2a_2^4x_2^{10}-2a_1^6a_2x_2^8+2a_1^3a_2^3x_2^9+a_1^4a_2^2x_2^8
 -2a_1^4a_2x_2^6+
 \\&2a_1a_2^3x_2^7+2a_1^2a_2^2x_2^6+2a_2^3x_2^6-2a_1^4x_2^4+2a_1a_2^2x_2^5+
 2a_1^2a_2x_2^4+1
\end{array}$$

 Using the definition of $h$ in Equation \ref{def of h}, we get that $$\left(\begin{array}{c}h_1
 \\h_2\\h_3 \\h_4 \\ h_5\end{array}\right) = s\cdot \left(\begin{array}{c}Y_1 - y_1'
 \\Y_2 - y_2'\\Y_3 - y_3' \\Y_4 - y_4' \\ Y_5 - y_5'\end{array}\right) - x_2^4 G(y') \cdot \left(\begin{array}{c}T_1
 \\T_2\\T_3 \\T_4 \\ T_5\end{array}\right),$$ where $$\begin{array}{ccl}
 y_1' &=& -18a_1^2a_2^5x_2^{12}+5a_1^3a_2^4x_2^{11}+7a_2^6x_2^{12}-18a_1a_2^5x_2^{11}+15a_1^2a_2^4x_2^{10}
 -4a_1^3a_2^3x_2^9-\\&&6a_2^5x_2^{10}+15a_1a_2^4x_2^9-12a_1^2a_2^3x_2^8+3a_1^3a_2^2x_2^7+5a_2^4x_2^8
 -12a_1a_2^3x_2^7+9a_1^2a_2^2x_2^6-\\&&2a_1^3a_2x_2^5-4a_2^3x_2^6+9a_1a_2^2x_2^5-6a_1^2a_2x_2^4+
 a_1^3x_2^3+3a_2^2x_2^4-6a_1a_2x_2^3+3a_1^2x_2^2-\\&&2a_2x_2^2+3a_1x_2+1\\
 y_2' &=& a_1^{12}x_2^{12}+2a_1^{10}a_2x_2^{12}-a_1^{11}x_2^{11}+a_1^8a_2^2x_2^{12}-2a_1^9a_2x_2^{11}+a_1^{10}x_2^{10}-a_1^7a_2^2x_2^{11}+\\&&2a_1^8a_2x_2^{10}-a_1^9x_2^9+a_1^6a_2^2x_2^{10}-2a_1^7a_2x_2^9+a_1^8x_2^8-a_1^5a_2^2x_2^9+2a_1^6a_2x_2^8-a_1^7x_2^7+\\&&a_1^4a_2^2x_2^8-2a_1^5a_2x_2^7+a_1^6x_2^6-a_1^3a_2^2x_2^7+2a_1^4a_2x_2^6-a_1^5x_2^5+a_1^2a_2^2x_2^6-2a_1^3a_2x_2^5+\\&&a_1^4x_2^4-a_1a_2^2x_2^5+2a_1^2a_2x_2^4-a_1^3x_2^3+a_2^2x_2^4-2a_1a_2x_2^3+a_1^2x_2^2+2a_2x_2^2-a_1x_2+\\&&1\\
 y_3' &=& a_1x_2^2+x_2\\
 y_4' &=& a_2x_2^3+x_2\\
 y_5' &=& \fracs{a_2^2}{4}x_2^4-\fracs{a_1^3+a_1a_2}{4}x_2^3+\fracs{a_1^2-a_2}{4}x_2^2-\fracs{a_1}{4}x_2+\fracs{1}{4}.
 \end{array}$$

 However, the output is too big to be printed.
 Following the idea in the above examples, we compute $Q$ and $g$. This is even bigger than $h$ so we print  the numerators and denominators of the coefficients just till degree 10 in the $x_i$'s. However in some cases  even the terms till degree 10 will be too many to write down and hence we will print just the first terms and ``$\ldots$" . 

 As a small remark, $Q_3$ contains also terms in degree 3 in the $T_i$ but the numerator of the coefficients have power greater than 10 and therefore they do not appear in our shortcutting.

  $$\arraycolsep=1.4pt\def\arraystretch{2.2}\begin{array}{crc}
 Q_1 =&  \fracs{\ti 3a_1^2x_1^2x_2^8-2a_1x_1^2x_2^7+4a_2x_1^2x_2^8+x_1^2x_2^6}{\ti  4a_1a_2^2x_2^5+8a_1a_2x_2^3+4a_1x_2+4a_2^2x_2^4+8a_2x_2^2+4} \cdot  T_1T_4  - \fracs{\ti x_2^6}{\ti 4a_2^2x_2^4+8a_2x_2^2+4} \cdot & T_1^2\\& + \fracs{\ti -a_1^4x_2^{10}+a_1^3x_2^9-2a_1^2a_2x_2^{10}-a_1^2x_2^8+2a_1a_2x_2^9+a_1x_2^7-a_2^2x_2^{10}-2a_2x_2^8-x_2^6}{\ti 4a_1a_2^2x_2^5+8a_1a_2x_2^3+4a_1x_2+4a_2^2x_2^4+8a_2x_2^2+4} \cdot & T_1T_2 \\& +\fracs{\ti -5a_1^4x_2^{10}+4a_1^3x_2^9-12a_1^2a_2x_2^{10}-3a_1^2x_2^8+8a_1a_2x_2^9+2a_1x_2^7-6a_2^2x_2^{10}
 -4a_2x_2^8-x_2^6}{\ti 16a_1^2a_2^2x_2^6+32a_1^2a_2x_2^4+16a_1^2x_2^2+32a_1a_2^2x_2^5+64a_1a_2x_2^3+32a_1x_2
 +16a_2^2x_2^4+32a_2x_2^2+16} \cdot & T_2^2 \\& +\fracs{\ti a_1^2x_1^2x_2^8+2a_1^2x_2^{10}+2a_1x_1^2x_2^7+4a_1x_2^9+x_1^2x_2^6+2x_2^8}{\ti 4a_1a_2^2x_2^5+8a_1a_2x_2^3+4a_1x_2+4a_2^2x_2^4+8a_2x_2^2+4} \cdot & T_1T_3 \\& + \fracs{\ti a_1x_1^2x_2^7-2a_1x_2^9+2a_2x_1^2x_2^8-4a_2x_2^{10}+x_1^2x_2^6-2x_2^8}{\ti 8a_1^2a_2^2x_2^6+16a_1^2a_2x_2^4+8a_1^2x_2^2+16a_1a_2^2x_2^5+32a_1a_2x_2^3+16a_1x_2+8a_2^2x_2^4
 +16a_2x_2^2+8}\cdot & T_2T_3 \\& + \fracs{\ti -x_1^4x_2^6+4x_1^2x_2^8-4x_2^{10}}{\ti 16a_1^2a_2^2x_2^6+32a_1^2a_2x_2^4+16a_1^2x_2^2+32a_1a_2^2x_2^5+64a_1a_2x_2^3+
 32a_1x_2+16a_2^2x_2^4+32a_2x_2^2+16} \cdot & T_3^2 \\&  + \fracs{\ti 6a_1^2x_1^2x_2^8-3a_1x_1^2x_2^7+6a_2x_1^2x_2^8+x_1^2x_2^6}{\ti  8a_1^2a_2^2x_2^6+16a_1^2a_2x_2^4+8a_1^2x_2^2+16a_1a_2^2x_2^5+32a_1a_2x_2^3+
 16a_1x_2+8a_2^2x_2^4+16a_2x_2^2+8} \cdot & T_2T_4 \\& + \fracs{\ti -x_1^4x_2^6+2x_1^2x_2^8}{\ti 8a_1^2a_2^2x_2^6+16a_1^2a_2x_2^4+8a_1^2x_2^2+16a_1a_2^2x_2^5+32a_1a_2x_2^3+
 16a_1x_2+8a_2^2x_2^4+16a_2x_2^2+8} \cdot & T_3T_4 \\& +\fracs{\ti -x_1^4x_2^6}{\ti 16a_1^2a_2^2x_2^6+32a_1^2a_2x_2^4+16a_1^2x_2^2+32a_1a_2^2x_2^5+64a_1a_2x_2^3
 +32a_1x_2+16a_2^2x_2^4+32a_2x_2^2+16}\cdot & T_4^2 \\

 Q_2 =&  \fracs{\ti -x_2^6}{\ti 4a_1^2x_2^2+8a_1x_2+4} \cdot T_2^2
 + \fracs{\ti 3a_1^2x_1^2x_2^8+3a_1x_1^2x_2^7-2a_2x_1^2x_2^8+x_1^2x_2^6}{\ti 2a_1^2x´_2^2+4a_1x_2+2} \cdot  T_2T_3
 + \fracs{\ti -x_1^4x_2^6}{\ti 4a_1^2x_2^2+8a_1x_2+4} \cdot &T_3^2 \\&
 + \fracs{\ti a_1^2x_1^2x_2^8-a_1x_1^2x_2^7+2a_2x_1^2x_2^8+x_1^2x_2^6}{\ti 2a_1^2x_2^2+4a_1x_2+2} \cdot T_2T_4
 + \fracs{\ti -x_1^4x_2^6+2x_1^2x_2^8}{\ti 2a_1^2x_2^2+4a_1x_2+2} \cdot T_3T_4
 + \fracs{\ti -x_1^4x_2^6}{\ti 4a_1^2x_2^2+8a_1x_2+4} \cdot &T_4^2 \\

 Q_3 =&
 \fracs{\ti 2a_1^3x_2^9-4a_1^2a_2x_2^{10}-2a_1^2x_2^8+2a_1a_2x_2^9+2a_1x_2^7-2a_2x_2^8-2x_2^6}{\ti 4a_1a_2^3x_2^7+12a_1a_2^2x_2^5+12a_1a_2x_2^3+4a_1x_2+4a_2^3x_2^6+12a_2^2x_2^4+12a_2x_2^2+4} \cdot &
T_1T_2 \\& + \fracs{\ti 7a_1^3x_2^9-28a_1^2a_2x_2^{10}-7a_1^2x_2^8+21a_1a_2x_2^9+7a_1x_2^7-21a_2^2x_2^{10}-21a_2x_2^8-7x_2^6}{\ti \ldots +
48a_1^2x_2^2+48a_1a_2^3x_2^7+144a_1a_2^2x_2^5+144a_1a_2x_2^3+48a_1x_2+16a_2^3x_2^6+4
8a_2^2x_2^4+48a_2x_2^2+16} \cdot & T_2^2 \\& +
\fracs{\ti 2a_1^2x_1^2x_2^8+2a_1^2x_2^{10}+4a_1x_1^2x_2^7+4a_1x_2^9-2a_2x_1^2x_2^8-2a_2x_2^{10}+2x_1^2x_2^6+2x_2^8}{\ti 4a_1a_2^3x_2^7+12a_1a_2^2x_2^5+12a_1a_2x_2^3+4a_1x_2+4a_2^3x_2^6+12a_2^2x_2^4+12a_2x_2^2+4 } \cdot & T_1T_3 \\& +
\fracs{\ti 7a_1^2x_1^2x_2^8+4a_1^2x_2^{10}+14a_1x_1^2x_2^7+8a_1x_2^9+7a_2x_1^2x_2^8+4a_2x_2^{10}+7x_1^2x_2^6+4x_2^8 }{\ti \ldots +24a_1^2x_2^2+24a_1a_2^3x_2^7+72a_1a_2^2x_2^5+72a_1a_2x_2^3+24a_1x_2+8a_2^3x_2^6+24a_2^2x_2^4+24a_2x_2^2+8 } \cdot & T_2T_3 \\& +
\fracs{\ti -7x_1^4x_2^6-8x_1^2x_2^8-4x_2^{10}}{\ti \ldots + 48a_1^2x_2^2+48a_1a_2^3x_2^7+144a_1a_2^2x_2^5+144a_1a_2x_2^3+48a_1x_2+16a_2^3x_2^6+48a_2^2x_2^4+48a_2x_2^2+16} \cdot & T_3^2 \\& +
\fracs{\ti 6a_1^2x_1^2x_2^8-4a_1x_1^2x_2^7+6a_2x_1^2x_2^8+2x_1^2x_2^6}{\ti 4a_1a_2^3x_2^7+12a_1a_2^2x_2^5+12a_1a_2x_2^3+4a_1x_2+4a_2^3x_2^6+12a_2^2x_2^4+12a_2x_2^2+4} \cdot & T_1T_4 \\& +
\fracs{\ti 21a_1^2x_1^2x_2^8-14a_1x_1^2x_2^7+35a_2x_1^2x_2^8+7x_1^2x_2^6}{\ti \ldots + 24a_1^2x_2^2+24a_1a_2^3x_2^7+72a_1a_2^2x_2^5+72a_1a_2x_2^3+24a_1x_2+8a_2^3x_2^6+24a_2^2x_2^4+24a_2x_2^2+8 } \cdot & T_2T_4 \\& +
\fracs{\ti -7x_1^4x_2^6-4x_1^2x_2^8}{\ti \ldots +  24a_1^2x_2^2+24a_1a_2^3x_2^7+72a_1a_2^2x_2^5+72a_1a_2x_2^3+24a_1x_2+8a_2^3x_2^6+24a_2^2x_2^4+24a_2x_2^2+8} \cdot & T_3T_4 \\
\end{array}$$

$$\arraycolsep=1.4pt\def\arraystretch{2.2}
\begin{array}{crc}
\phantom{Q_3 = }& +\fracs{\ti -7x_1^4x_2^6}{\ti \ldots  + 48a_1^2x_2^2+48a_1a_2^3x_2^7+144a_1a_2^2x_2^5+144a_1a_2x_2^3+48a_1x_2+16a_2^3x_2^6+48a_2^2x_2^4+48a_2x_2^2+16} \cdot & T_4^2 \\& +
\fracs{\ti a_1^4x_2^{10}-a_1^2a_2x_2^{10}-a_2^2x_2^{10}+a_2x_2^8-x_2^6}{\ti 4a_2^3x_2^6+12a_2^2x_2^4+12a_2x_2^2+4} \cdot T_1^2 +\fracs{\ti -x_2^6}{\ti 2a_2^2x_2^4+4a_2x_2^2+2} \cdot & T_1T_5  \\& + \fracs{\ti -3a_2^2x_2^{10}-6a_2x_2^8-3x_2^6}{\ti 4a_1^2a_2^2x_2^6+8a_1^2a_2x_2^4+4a_1^2x_2^2+8a_1a_2^2x_2^5+16a_1a_2x_2^3+8a_1x_2+4a_2^2x_2^4+8a_2x_2^2+4 } \cdot & T_2 T_5 \\& + \fracs{\ti 9a_1^2x_1^2x_2^8+6a_1^2x_2^{10}+9a_1x_1^2x_2^7+6a_1x_2^9+3x_1^2x_2^6+2x_2^8 }{\ti 4a_1^2a_2^2x_2^6+8a_1^2a_2x_2^4+4a_1^2x_2^2+8a_1a_2^2x_2^5+16a_1a_2x_2^3+8a_1x_2+4a_2^2x_2^4+8a_2x_2^2+4} \cdot & T_3T_5 \\& + \fracs{\ti 3a_1^2x_1^2x_2^8-3a_1x_1^2x_2^7+12a_2x_1^2x_2^8+3x_1^2x_2^6}{\ti 4a_1^2a_2^2x_2^6+8a_1^2a_2x_2^4+4a_1^2x_2^2+8a_1a_2^2x_2^5+16a_1a_2x_2^3+8a_1x_2+4a_2^2x_2^4+8a_2x_2^2+4} \cdot & T_4T_5

 \\
 \end{array}$$
 Having $Q_i$ we obtain $g_i$: 
 $$\begin{array}{rcrc}
 g_1  =  Q_1  + &{\big( \ti \ldots -6a_1^4x_2^4+30a_1^3a_2^3x_2^9+
 45a_1^2a_2^4x_2^{10}+6a_1^2a_2^2x_2^6+6a_1^2a_2x_2^4+6a_1a_2^3x_2^7+6a_1a_2^2x_2^5+6a_2^3x_2^6+1 \big)} & \cdot  T_1 \\
 
 g_2  =  Q_2 +& {\big(\ti \ldots -6a_1^4x_2^4+30a_1^3a_2^3x_2^9+45a_1^2a_2^4x_2^{10}+6a_1^2a_2^2x_ 2^6
 +6a_1^2a_2x_2^4+6a_1a_2^3x_2^7+6a_1a_2^2x_2^5+6a_2^3x_2^6+1 \big)} & \cdot  T_2 \\
 +&{\big( \ti \ldots +6a_1^6x_2^4-30a_1^5a_2^3x_2^9-45a_1^4a_2^4x_2^{10}-
 6a_1^4a_2^2x_2^6-6a_1^4a_2x_2^4-6a_1^3a_2^3x_2^7-6a_1^3a_2^2x_2^5-6a_1^2a_2^3x_2^6-a_1^2\big)}& \\
 
 g_3 = Q_3 + & {\big(\ti \ldots   -6a_1^4x_2^4+30a_1^3a_2^3x_2^9+45a_1^2a_2^4x_2^{10}+6a_1^2a_2^2x_2^6+6a_1^2a_2x_2^4+6a_1a_2^3x_2^7+6a_1a_2^2x_2^5+6a_2^3x_2^6+1 \big)}& \cdot T_3 \\ + &{\big( \ti \ldots +  24a_1^2a_2^5x_2^{10}+18a_1^2a_2^4x_2^8+a_1^2a_2^2x_2^4+a_1^2a_2x_2^2+12a_1a_2^5x_2^9+a_1a_2^3x_2^5+a_1a_2^2x_2^3+6a_2^6x_2^{10}+a_2^3x_2^4\big)}.&
 \\
 \end{array}$$

 The General Neron Desingularization is a localization of $D[Y,T]/(h,g)$.
 For this example we will need a function \begin{verbatim}invp(poly p, int bound,string param,string variab)\end{verbatim} which computes computes the inverse of \verb"p" till order \verb"bound" in $\mathbb Q($\verb"param"$)[$\verb"variab"$]$.

 The input for this example is the following:
 \begin{verbatim}
ring All = 0,(a1,a2,x1,x2,Y1,Y2,Y3,Y4),dp;
int nra = 2;
int nrx = 2;
int nry = 4;
ideal xid = x1^2-x2^3;
ideal yid = Y3^2-x1^2*Y1*Y2,Y4^2-x2*Y2*Y3;
ideal aid = 0;
poly y1,y2,y3,y4;
y3 = 1+a1*x2;
y4 = 1+a2*x2^2;
string as,xs;
if(nra != 0)
{
  as = string(var(1));
  for( int i=2;i<=nra;i++)
  {
    as = as+","+string(var(i));
  }
}
if(nrx!=0)
{
  xs = string(var(nra+1));
  for(int i=nra+2;i<=nra+nrx;i++)
  {
    xs = xs+","+string(var(i));
  }
}
y1 = y3^3*invp(y4^2,12,as,xs);
y2 = y4^2*invp(y3,12,as,xs);
y3 = x2*y3;
y4 = x2*y4;
ideal f = y1,y2,y3,y4;
desingularization(All, nra,nrx,nry,xid,yid,aid,f,"injective","debug");
 \end{verbatim}
 \end{Example}

\begin{Remark} Our algorithm works mainly for local domains of dimension one. If $A'$ is not a domain but a Cohen-Macaulay ring of dimension one then we can  build an algorithm in the idea of the proof of Theorem \ref{gr}. In this case it is necessary to change $B$ by an Elkik's trick  \cite{El} (see \cite[Lemma 3.4]{P0}, \cite[Proposition 4.6]{S}, \cite[Corollary 5.10]{P2}). The algorithm and as well Theorem \ref{gr} might be also build when $A'$ is not Cohen-Macaulay substituting in the proofs $d$ by a certain power $d^r$ such that $(0:_Ad^r)=(0:_Ad^{r+1})$. Such algorithm could be too complicated to work really.

On the other hand, if we restrict our present algorithm to the case when $A'$ is the completion  of $A$ then we might get a faster algorithm using the idea of the proof of Theorem \ref{gr}. This algorithm could be useful in the arc frame.
\end{Remark}

\end{document}